\documentclass[11pt,a4paper]{amsart}
\usepackage[T1]{fontenc}
\usepackage{lmodern,mathrsfs,amsmath,amssymb,amsthm,mathtools,booktabs,array,longtable}
\usepackage[left=22mm,right=22mm,top=14mm,bottom=22mm,headheight=14pt]{geometry}
\newcommand{\href}[2]{#2}
\usepackage{microtype}
\newtheorem{theorem}{Theorem}[section]
\newtheorem{lemma}[theorem]{Lemma}
\newtheorem{proposition}[theorem]{Proposition}
\newtheorem{corollary}[theorem]{Corollary}
\theoremstyle{remark}
\numberwithin{equation}{section}
\DeclareMathOperator{\Pf}{Pf}
\DeclareMathOperator{\Res}{Res}
\DeclareMathOperator{\diag}{diag}
\DeclareMathOperator{\rank}{rank}
\newcommand{\Q}{\mathbb Q}
\newcommand{\G}{\mathsf G}
\newcommand{\U}{\mathsf U}
\newcommand{\B}{\mathsf B}
\newcommand{\J}{\mathsf J}
\newcommand{\CC}{\mathscr C}
\newcommand{\OO}{\mathscr O}
\newcommand{\HH}{\mathscr H}
\newcommand{\PP}{\mathscr P}
\newcommand{\EE}{\mathcal E}

\allowdisplaybreaks
\title[The Colomo--Pronko conjecture]{The Colomo--Pronko conjecture for frozen-corner alternating sign matrices}
\author{Yinjie Li}
\subjclass[2020]{Primary 05A15; Secondary 15A15, 82B20}
\keywords{Alternating sign matrix, frozen corner, six-vertex model, Pfaffian, binomial matrix, Tracy--Widom distribution, formal verification}
\makeatletter
\patchcmd{\@maketitle}{\global\topskip42\p@}{\global\topskip10\p@}{}{\PackageError{CP}{Title spacing patch failed}{}}
\def\@setauthors{%
  \begin{center}
  \vspace{14pt}
  \normalsize YINJIE LI\textsuperscript{a}\\[2pt]
  \textsuperscript{a} Independent Researcher, Nanjing, China\\[2pt]
  Email: \texttt{stewarteur@gmail.com}
  \end{center}}
\makeatother
\date{}
\begin{document}
\begin{abstract}
We prove the Colomo--Pronko frozen-corner determinant conjecture for alternating sign matrices of every size and every freezing parameter. The proof connects a known multiple-integral formula for the enumeration with the Colomo--Pronko and Fischer--Reibnegger determinants. On the enumerative side, we obtain determinant representations built from polynomial kernels independent of the freezing parameter. An inverse identity for the commutator of a signed Pascal matrix with reversal identifies these kernels with the candidate matrix; in odd dimension, the comparison uses its one-dimensional nullspace and a central projection. The finite-dimensional algebraic core has been formalized in Lean~4. Combined with Colomo and Pronko's asymptotic theorem for the determinant, our result removes the conjectural input and yields an unconditional GUE Tracy--Widom limit for the diagonal intersection of the frozen boundary of a uniformly random alternating sign matrix.
\end{abstract}
\maketitle
\enlargethispage{2pt}

\section{Introduction}
Alternating sign matrices connect exact enumeration with the domain-wall six-vertex model. Their total number and the refinement recording the position of the unique $1$ in the first row are given by classical product formulas \cite{Z,Ku,Zref}. Operator formulas for monotone triangles \cite{Fischer} and multiply-refined enumeration \cite{Behrend} provide further ways to impose boundary data. Frozen-corner enumeration asks a different question: how many $n\times n$ matrices have an $s\times s$ square of zeros at a specified corner? Colomo and Pronko proposed a determinant for this count in the square-ice setting \cite{CP24}, and subsequently formulated the conjecture explicitly for ASMs \cite[Conjecture~1]{CP}. We prove it for all $n$ and $s$.

The probabilistic significance of this refinement comes from the frozen boundary. In a uniformly random large ASM, ordered regions occupy the four corners, while a disordered region remains between them. The limiting arctic curve was obtained by Colomo and Pronko \cite{CPshape}; a rigorous proof of boundary concentration, including the square domain, was given by Aggarwal \cite{Agg}. In the upper-left quadrant the curve is an arc of an ellipse, and its intersection with the diagonal is $(y_{\mathrm c},y_{\mathrm c})$, where
\[
 y_{\mathrm c}=1-\frac{\sqrt3}{2}.
\]
The frozen-corner count records fluctuations about this location. With the integer boundary coordinate $\xi_n$ defined in Section~2, the event $\{\xi_n>s\}$ is exactly that the top-left $s\times s$ square contains only zeros. Thus $B_{n,s}/A_n$ is its survival probability, as in \cite[(4.3)]{CP}.

An exact enumeration formula need not be suitable for extracting this critical-scale distribution. The known multiple integral has a number of variables that grows with the system, together with pairwise interaction factors. Colomo and Pronko's proposed determinant instead admits a Fredholm representation whose kernel can be analyzed near a double saddle point \cite[Sections~4.2--4.4]{CP24}. Their determinant asymptotics are stated in \cite[Theorem~4]{CP}: at
\[
 s=\left\lfloor ny_{\mathrm c}-\kappa n^{1/3}t\right\rfloor,
 \qquad \kappa=2^{-4/3}3^{-1/6},
\]
the determinant converges to the GUE Tracy--Widom distribution function $F_2(t)$ \cite{TW}. The missing input for the probabilistic conclusion was the identity between that determinant and $B_{n,s}/A_n$. Theorem~\ref{thm:main} supplies it. Corollary~\ref{cor:tw} below therefore gives the unconditional limit for $(ny_{\mathrm c}-\xi_n)/(\kappa n^{1/3})$.

This is a one-point fluctuation result at a fixed direction through the frozen boundary. It differs from the GOE Tracy--Widom theorem of Ayyer, Chhita and Johansson \cite{ACJ} for the maximum of the top path, where the observable is optimized over position. Numerical work of Pr\"ahofer and Spohn \cite{PS} had also supported the $n^{1/3}$ scale and the Tracy--Widom law for the ice-point boundary. Since uniform ASMs correspond to the six-vertex model at $\Delta=1/2$, rather than the free-fermion point $\Delta=0$, the corollary gives a GUE edge-fluctuation law in this non-free-fermionic model. It concerns the diagonal intersection, not convergence of the entire boundary to an Airy process.

The proof separates three structures. On the candidate side, we establish the equivalence with Fischer and Reibnegger's expression \cite[Conjecture~6]{FR} and reduce it to the binomial matrix $K_n$ of \eqref{eq:K}. On the enumerative side, we start independently from the known integral and derive Pfaffians of fixed polynomial kernels: the freezing parameter enters only through the central interval of retained coefficients. Reflection then converts these Pfaffians to ordinary determinants. The candidate reduction alone does not identify the resulting matrices.

The connection is provided by the inverse commutator identity
\[
 \Omega_{2N}^{-1}=\J_{2N}-C^{(N)}.
\]
Here $\Omega_{2N}=\G_{2N}R_{2N}-R_{2N}\G_{2N}$, with $\G_{2N}$ the signed Pascal matrix and $R_{2N}$ reversal, while $C^{(N)}$ is the coefficient matrix of the enumerative kernel. Polynomial divisibility and an exact coefficient truncation produce this inverse. In odd dimension, a one-dimensional nullspace replaces invertibility, and a projection permits removal of the central coordinate. The resulting two matrix identities identify the relevant minors for every freezing parameter. Their scalar normalization uses only unrestricted enumeration. Thus the proof links the integral and determinant descriptions through a finite-dimensional algebraic mechanism, rather than by further evaluation of special cases.

Section~\ref{sec:prelim} establishes the refined-polynomial identities. Section~\ref{sec:candidate} gives the binomial reduction. Sections~\ref{sec:integral} and \ref{sec:fold} evaluate the enumeration integral, and Sections~\ref{sec:inverse}--\ref{sec:connections} establish the matrix comparison. Section~\ref{sec:closure} completes the proof, including the boundary cases. The finite-dimensional algebraic core has been formalized in Lean~4; Section~\ref{sec:lean} specifies its scope. The probabilistic corollary uses the asymptotic theorem cited above, separately from the exact enumeration proof.

\section{The enumeration problem and the conjectural determinant}
An alternating sign matrix (ASM) is a square matrix with entries in $\{0,1,-1\}$, with alternating nonzero entries along each row and column, each of which sums to $1$. Let $B_{n,s}$ count $n\times n$ ASMs whose top-left $s\times s$ square is zero. We include $s=0$, meaning no restriction. Reflection shows that the choice of corner does not affect the count. Write
\begin{equation}\label{eq:asm}
 A_0=1,\qquad A_n=\prod_{j=0}^{n-1}\frac{(3j+1)!}{(n+j)!}.
\end{equation}
The ASM and refined ASM theorems \cite{Z,Ku,Zref} give the normalized refinement
\begin{equation}\label{eq:h}
 h_m(z)=\sum_{a=0}^{m-1}
 \frac{\binom{m+a-1}{m-1}\binom{2m-a-2}{m-1}}
 {\binom{3m-2}{m-1}}z^a,\qquad
 b_m=h_m(0)=\frac{A_{m-1}}{A_m}>0.
\end{equation}
In particular $h_m(1)=1$ and $h_m(z)=z^{m-1}h_m(1/z)$.

All auxiliary matrices below are indexed from zero. The interval $a:b$ means $a,a+1,\ldots,b-1$; binomial coefficients outside their natural range are zero. Empty determinants and Pfaffians are $1$, with the relevant boundary cases justified separately.
Put $r=n-s$, $m_i=r+i+1$, and, for $0\le i<s$, define
\begin{equation}\label{eq:cpM}
 f_i^\pm(z)=(1\pm(-1)^{i+1}z)(1-z)^iz^{-i-1}h_{m_i}(z),\qquad
 M_{ij}=b_{m_j}^{-1}\Res_{z=0}\Res_{w=0}
 \frac{f_i^+(z)f_j^-(w)}{1-z-w}.
\end{equation}
These are \cite[(1.7)--(1.9)]{CP}, after subtracting one from each matrix index. The normalization is attached to the column index $j$.

\begin{theorem}\label{thm:main}
For all $n\ge1$ and $0\le s\le n$,
\begin{equation}\label{eq:main}
 B_{n,s}=A_n\det(I_s-M).
\end{equation}
Both sides are zero when $2s>n$.
\end{theorem}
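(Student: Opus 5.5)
The plan follows the architecture announced in the introduction: compute $B_{n,s}/A_n$ independently from the known multiple-integral formula, compute $\det(I_s-M)$ independently by reducing it to a binomial matrix, and then match the two finite-dimensional objects by a matrix identity.

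\emph{Enumerative side.} Start from the known multiple-integral representation of the frozen-corner count, with $s$ contour variables, the factors $h_{m_i}$, and a pairwise interaction. Expand the interaction Pfaffian-style (de Bruijn) so that $B_{n,s}/A_n$ becomes a Pfaffian of a skew kernel. The kernel should be polynomial and independent of $s$; the parameter $s$ enters only through which central window of coefficients is kept. Next, use the reflection symmetry $h_m(z)=z^{m-1}h_m(1/z)$ to fold this Pfaffian into an ordinary determinant of a minor of a fixed coefficient matrix $C^{(N)}$. The parity of the size decides whether this is an even-dimensional block or an odd one with a central coordinate.

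\emph{Candidate side.} Take each residue in \eqref{eq:cpM} coefficientwise, expanding $1/(1-z-w)=\sum\binom{k+l}{k}z^kw^l$. This writes $M=PKQ$, where $K$ is the binomial matrix $K_n$ and $P,Q$ are coefficient matrices built from $f_i^\pm$. The factors $b_{m_j}^{-1}$ are absorbed into the column normalization. Using $\det(I-PKQ)=\det(I-KQP)$ and the triangularity of $(1-z)^iz^{-i-1}$, rewrite $\det(I_s-M)$ as a minor of $\J-\G R$ type binomial data; this is the Fischer--Reibnegger form.

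\emph{Comparison (main obstacle).} The hardest step is proving $\Omega_{2N}^{-1}=\J_{2N}-C^{(N)}$ with $\Omega_{2N}=\G_{2N}R_{2N}-R_{2N}\G_{2N}$. I would first verify $\Omega_{2N}(\J_{2N}-C^{(N)})=I$ on generating functions. $\G$ acts as the substitution $z\mapsto -z/(1-z)$-type map, and $R$ acts as reversal $z^k\mapsto z^{2N-1-k}$. The product should then reduce to a polynomial that is divisible by a fixed factor, with only a truncation error, and showing this error vanishes exactly in degrees below $2N$ is where I expect the work. Once the inverse holds, Jacobi's complementary-minor theorem turns central minors of $\J-C$ into complementary minors of $\Omega$, which are the candidate minors, for every $s$ at once.

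In odd dimension $\Omega$ is singular with a one-dimensional kernel, which I would compute explicitly. I would then project along it to delete the central coordinate and reduce to the even case. The identity holds up to a scalar independent of $s$, and setting $s=0$ fixes that scalar as $1$ via $B_{n,0}=A_n$.

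\emph{Boundary cases.} When $2s>n$, the rows $r+1$ through $r+s$ have too few nonzero columns available, so $B_{n,s}=0$. On the determinant side, $I-M$ is then singular, because $\deg h_{m_i}$ forces a dependency.
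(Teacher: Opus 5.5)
Your comparison step fails as stated. Jacobi's theorem does convert central minors of $\J_{2N}-C^{(N)}=\Omega_{2N}^{-1}$ into complementary minors of $\Omega_{2N}$, but neither side of that equation is what you need.

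First, the enumerative side produces minors of (the fold of) $C^{(N)}$ itself, namely $B_{2N,s}=A_N^2\det(D_N)_{s:N,s:N}$. The term $\J_{2N}$ does not vanish on central windows: for $N=2$, $s=1$ the window is $\{1,2\}$, and $(\J_4)_{12}=(\G_4^{-1})_{22}-(\G_4^{-1})_{11}=10-14=-4$.

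Second, the candidate determinants are not complementary minors of $\Omega$. For $n=4$, the minors of $\Omega_4$ on $0:s$ together with $4-s:4$ are $1,361,16$ for $s=0,1,2$. These are not proportional to $B_{4,s}=42,35,4$.

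$K_n$ is tied to $\G$ only through the congruence $\B_q(K_q-I)\B_q^T=(R_q+I)\G_q(R_q-I)$. The missing idea is the bridge of Section~\ref{sec:connections} built on it:
\begin{itemize}
\item Congruence by $\B_{2N}$ and one block-row operation make $L_e=K_{2N}-E_{2N,N}$ block triangular, with diagonal blocks $X$ (the fold of $\G_{2N}$) and $H=H_0H_0^T$.
\item The relation $\Omega_{2N}\binom I{-I}=\binom II X$ makes the fold of $\Omega_{2N}^{-1}$ equal to $-X^{-1}$. Hence $D_N=D_0+X^{-1}$, where $D_0$ is the fold of $\J_{2N}$.
\item An explicit block computation of $D_0$ from $\G_{2N}^{-1}$ gives $I_N+(L_e^{-1})_{N:2N,N:2N}=\U_N^TR_ND_NR_N\U_N$. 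The $I_N$ is exactly where $\J_{2N}$ is absorbed.
\end{itemize}
Only then do $\det(L+\Pi\Pi^T)=\det L\det(I+\Pi^TL^{-1}\Pi)$, and the fact that congruence by the upper triangular $\U_N$ preserves leading principal minors, give all $s$ at once. Your normalization at $s=0$ then works as you say.

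In odd dimension, knowing the kernel of $\Omega_{2N+1}$ is not enough. You also need three things: that the kernel is spanned by the border vector $\nu$ of the odd Pfaffian; the projected fold $S_o$, which is the Schur complement of $\alpha$ in $\EE_N$; and the relation $w^TX^{-1}=-2v^T/\alpha$ supplied by $\nu^T\Omega_{2N+1}=0$.

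The other two sides are asserted rather than derived, and partly misdescribed.

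\emph{Enumerative side.} The Pfaffians on the central windows $s:2N-s$ have size $n-2s$ (plus a border in odd size). So they must come from the symmetric representation with $d=n-2s$ variables, built from $h_{r,d}(z)h_{r,d}(\eta z)\prod_{i\ne j}(z_i-z_j)/a(z_i,z_j)$, not from an $s$-variable integral. Moreover, de Bruijn's formula applies only once the integrand is a determinant times a Pfaffian. Getting there uses:
\begin{itemize}
\item \eqref{eq:tau-diff}, which turns the interaction into a ratio of Vandermondes in $\tau$;
\item the recurrence \eqref{eq:recurrence}, which reduces $\Delta_{r,d}$ to columns $A_i\tau_i^j$, $B_i\tau_i^j$;
\item the Pfaffian identity \eqref{eq:pf-product}.
\end{itemize}
Polynomiality of the kernel is not automatic: it rests on the adjacent-order identity of Lemma~\ref{lem:adjacent}, which cancels the paired poles.

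\emph{Candidate side.} Expanding $1/(1-z-w)$ yields the Pascal matrix $\binom{k+l}k$, not $K_n$. Reaching $\det(K_n-E_{n,s})$ requires the Fischer--Reibnegger triangular factorization, whose inverse factor has entries $[t^{j-i}]\Phi_{j+1}(t)$ by Lemma~\ref{lem:fractional}, followed by elimination of the triangular factors. This is also what gives $\det K_n=A_n$.

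\emph{Boundary case.} For $2s>n$, your claim that $\deg h_{m_i}$ forces a dependency in $I_s-M$ is unsupported. The paper instead bounds $\rank(K_n-E_{n,s})$ using $K_n-I=(T+I)(T^T-I)$ and $\rank(T^T-I)=\lfloor n/2\rfloor$, and transfers this through Proposition~\ref{prop:candidate}.
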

We shall use the matrix
\begin{equation}\label{eq:K}
 (K_n)_{ij}=\binom{i+j}{i}+(-1)^i\binom ji-(-1)^j\binom ij,
 \qquad E_{n,s}=\diag(0_{n-s},I_s).
\end{equation}

\subsection*{Diagonal boundary fluctuations}
For a uniformly sampled ASM $(a_{ij})_{1\le i,j\le n}$, define
\[
 \xi_n=\min\{\max(i,j):a_{ij}\ne0\}.
\]
Thus $\xi_n-1$ is the side length of the largest all-zero square at the top-left corner. This fixes the lattice convention for the diagonal boundary coordinate by the exact identity
\begin{equation}\label{eq:diagonal-tail}
 \mathbb P_n(\xi_n>s)=\frac{B_{n,s}}{A_n},\qquad 0\le s\le n,
\end{equation}
which is the survival-probability convention of \cite[(4.3)]{CP}. Let $F_2$ be the GUE Tracy--Widom distribution function in its standard Airy-kernel normalization \cite{TW}, and put $y_{\mathrm c}=1-\sqrt3/2$ and $\kappa=2^{-4/3}3^{-1/6}$.

\begin{corollary}[Diagonal Tracy--Widom fluctuations]\label{cor:tw}
For uniformly random $n\times n$ alternating sign matrices, and every $t\in\mathbb R$,
\begin{equation}\label{eq:tw-limit}
 \lim_{n\to\infty}\mathbb P_n\left(
 \frac{ny_{\mathrm c}-\xi_n}{\kappa n^{1/3}}\le t\right)=F_2(t).
\end{equation}
In particular, the diagonal boundary coordinate has $n^{1/3}$-scale fluctuations, or $n^{-2/3}$-scale fluctuations after division by $n$.
\end{corollary}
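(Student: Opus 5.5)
The plan is to deduce Corollary~\ref{cor:tw} from Theorem~\ref{thm:main} together with the determinant asymptotics of \cite[Theorem~4]{CP}. The exact identity \eqref{eq:main}, combined with \eqref{eq:diagonal-tail}, gives
\[
 \mathbb P_n(\xi_n>s)=\det(I_s-M)\qquad\text{for all } 0\le s\le n.
\]
Here $M=M^{(n,s)}$ is the Colomo--Pronko matrix \eqref{eq:cpM}. The asymptotic theorem asserts that for $s_n(t)=\lfloor ny_{\mathrm c}-\kappa n^{1/3}t\rfloor$ the determinant $\det(I_{s_n(t)}-M)$ converges to $F_2(t)$. Since $0<y_{\mathrm c}<1/2$, we have $0\le s_n(t)\le n$ for all large $n$, and in fact $2s_n(t)<n$. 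So the theorem applies in the nondegenerate range, and $\mathbb P_n(\xi_n>s_n(t))\to F_2(t)$ for every fixed $t$.

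Next I translate the event in \eqref{eq:tw-limit} into a tail event for the integer variable $\xi_n$. Put $x_n(t)=ny_{\mathrm c}-\kappa n^{1/3}t$. Then
\[
 \frac{ny_{\mathrm c}-\xi_n}{\kappa n^{1/3}}\le t
 \iff \xi_n\ge x_n(t)
 \iff \xi_n>\lceil x_n(t)\rceil-1 .
\]
The threshold $\lceil x_n(t)\rceil-1$ agrees with $s_n(t)=\lfloor x_n(t)\rfloor$ unless $x_n(t)$ is an integer, in which case it is one smaller. This off-by-one mismatch with the normalization of \cite{CP} is the only point needing care, and I handle it by a sandwich argument.

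Fix $\varepsilon>0$. Since $\kappa n^{1/3}\varepsilon\to\infty$, for all large $n$ we have
\[
 \lfloor x_n(t+\varepsilon)\rfloor\le\lceil x_n(t)\rceil-1\le\lfloor x_n(t-\varepsilon)\rfloor .
\]
The survival function $s\mapsto\mathbb P_n(\xi_n>s)$ is nonincreasing in $s$. Hence
\[
 \mathbb P_n\bigl(\xi_n>s_n(t-\varepsilon)\bigr)
 \le\mathbb P_n\bigl(\xi_n>\lceil x_n(t)\rceil-1\bigr)
 \le\mathbb P_n\bigl(\xi_n>s_n(t+\varepsilon)\bigr).
\]
Applying the first step at $t\pm\varepsilon$, the $\liminf$ and $\limsup$ of the middle term lie between $F_2(t-\varepsilon)$ and $F_2(t+\varepsilon)$. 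Letting $\varepsilon\downarrow0$ and using continuity of $F_2$ gives \eqref{eq:tw-limit}.

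The final sentence of the corollary is just a restatement of the scaling: $\xi_n-ny_{\mathrm c}$ is of order $n^{1/3}$, so $\xi_n/n-y_{\mathrm c}$ is of order $n^{-2/3}$. No step here is a real obstacle. All the depth is in Theorem~\ref{thm:main} and in the cited asymptotic theorem, which I take as given. The only genuine check is that the lattice convention in \eqref{eq:diagonal-tail} matches the survival-probability convention of \cite[(4.3)]{CP}, which the paper has fixed by definition, plus the rounding issue resolved above.
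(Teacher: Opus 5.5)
Your proposal is correct and follows the paper's proof. You combine Theorem~\ref{thm:main} with \eqref{eq:diagonal-tail} and the cited Colomo--Pronko determinant limit, then finish with continuity of $F_2$. The only difference is cosmetic. The paper notes that by integrality the strict event $\{(ny_{\mathrm c}-\xi_n)/(\kappa n^{1/3})<t\}$ equals $\{\xi_n>s_n(t)\}$ exactly, and then passes to $\le t$ by continuity. You instead write out the equivalent monotonicity sandwich between $s_n(t\pm\varepsilon)$ directly for the non-strict event.
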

\begin{proof}
Set $s_n(t)=\lfloor ny_{\mathrm c}-\kappa n^{1/3}t\rfloor$. For all sufficiently large $n$, this is an admissible freezing parameter. By Theorem~\ref{thm:main} and \eqref{eq:diagonal-tail},
\[
 \mathbb P_n\left(\frac{ny_{\mathrm c}-\xi_n}{\kappa n^{1/3}}<t\right)
 =\frac{B_{n,s_n(t)}}{A_n}
 =\left.\det(I_s-M)\right|_{s=s_n(t)}.
\]
The first equality uses that $\xi_n$ is integer-valued. Colomo and Pronko's determinant limit \cite[Theorem~4, (4.4)]{CP}, derived in \cite[Section~4.4]{CP24}, gives $F_2(t)$. Continuity of $F_2$ also gives the limit with $\le t$, proving \eqref{eq:tw-limit}.
\end{proof}
The external asymptotic input in this corollary is the stated determinant limit. Its identification with a probability is now supplied by Theorem~\ref{thm:main}; the exact enumeration proof in Sections~3--10 does not use the asymptotic result.

\section{Refined polynomials and their identities}\label{sec:prelim}
\subsection{Fractional substitution and the coefficient gap}
\begin{lemma}\label{lem:fractional}
For $m\ge1$, with $\Phi_m(t)=h_m(t/(t-1))$,
\begin{equation}\label{eq:fractional}
 \Phi_m(t)=(1-t)^m h_m(t)
 +(-1)^{m-1}\frac{t^{2m-1}}{(1-t)^{m-1}}h_m(1-t).
\end{equation}
Consequently $\Phi_m(t)\equiv(1-t)^mh_m(t)\pmod{t^{2m-1}}$ in $\Q[[t]]$.
\end{lemma}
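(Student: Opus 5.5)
The plan is to reduce the full identity \eqref{eq:fractional} to its truncated consequence together with the palindromic symmetry $h_m(z)=z^{m-1}h_m(1/z)$. The truncated congruence itself will come from the classical Pfaff and Euler transformations of ${}_2F_1$, taken at a regularized lower parameter.

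\emph{Step 1: clear denominators.} Write $h_m(z)=\sum_{a=0}^{m-1}c_az^a$ and multiply \eqref{eq:fractional} by $(1-t)^{m-1}$. Since $(1-t)^{m-1}\bigl(t/(t-1)\bigr)^a=(-1)^at^a(1-t)^{m-1-a}$, the left side becomes the polynomial
\[
P(t)=\sum_a c_a(-1)^at^a(1-t)^{m-1-a}.
\]
The claim is then the polynomial identity $D(t)=0$, where
\[
D(t)=P(t)-(1-t)^{2m-1}h_m(t)-(-1)^{m-1}t^{2m-1}h_m(1-t),
\]
and $\deg D\le 3m-2$.

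\emph{Step 2: a symmetry of $D$.} Substitute $t\mapsto 1-t$ and reindex by $a\mapsto m-1-a$, using the symmetry $c_a=c_{m-1-a}$ from \eqref{eq:h}. This gives $P(1-t)=(-1)^{m-1}P(t)$. The two remaining terms of $D$ are exchanged with the same sign, so $D(1-t)=(-1)^{m-1}D(t)$.

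Consequently, once we know $t^{2m-1}\mid D$, also $(1-t)^{2m-1}\mid D$. Then $D$ is divisible by a polynomial of degree $4m-2>3m-2\ge\deg D$, which forces $D=0$. Since the last term of $D$ is already $O(t^{2m-1})$, it remains to prove the congruence $\Phi_m(t)\equiv(1-t)^mh_m(t)\pmod{t^{2m-1}}$. This congruence is also the stated consequence, which follows by multiplying by $(1-t)^{1-m}$, a unit in $\Q[[t]]$.

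\emph{Step 3: the congruence via hypergeometric transformations.} From \eqref{eq:h}, the coefficient ratio is
\[
\frac{c_{a+1}}{c_a}=\frac{(m+a)(1-m+a)}{(a+1)(2-2m+a)}.
\]
Hence $h_m(z)=b_m\,{}_2F_1(1-m,m;2-2m;z)$, where the series terminates at degree $m-1$ because of the numerator parameter $1-m$. The lower parameter $2-2m$ is a nonpositive integer, but $(2-2m)_k\neq0$ for $k\le 2m-2$.

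I would therefore replace $2-2m$ by a generic parameter $c$ and work with $F_c(z)={}_2F_1(1-m,m;c;z)$. Applying Pfaff's transformation gives
\[
F_c\bigl(t/(t-1)\bigr)=(1-t)^{1-m}\,{}_2F_1(1-m,c-m;c;t).
\]
Euler's transformation then converts the right side into
\[
(1-t)^{1-m}(1-t)^{c-1+2m-m}\,{}_2F_1(1-m,m;c;t)\cdot(\text{correction}).
\]
At $c=2-2m$, the exponent $c-a-b$ equals $2m-1$ and the parameters return to $(1-m,m)$, yielding $(1-t)^mF_c(t)$.

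The point needing care, and the main obstacle, is justifying the specialization $c\to2-2m$. Both sides are identities in $\Q(c)[[t]]$. Their coefficients of $t^k$ for $k\le 2m-2$ involve only the Pochhammer symbols $(c)_j$ with $j\le 2m-2$, so these coefficients are regular at $c=2-2m$ (for Euler's series, the terms with $j\ge m$ vanish anyway because of the factor $(1-m)_j$). Comparing coefficients of $t^0,\dots,t^{2m-2}$ after specialization therefore gives exactly the congruence modulo $t^{2m-1}$. This also explains why only a truncated identity survives at all.

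If this limiting argument turns out to be delicate, the fallback is direct: extract $[t^k]$ from both sides as finite binomial sums and verify the resulting Chu--Vandermonde-type identity for $k\le 2m-2$.
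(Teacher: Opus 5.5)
Your proof is correct, but it is organized differently from the paper's. The paper sets $q=m-1$ and derives the hypergeometric differential equation for $h_m$. It transfers this to $P(z)=(1-z)^{2q+1}h_m(z)$, whose two-term recurrence forces the gap $p_{q+1}=\cdots=p_{2q}=0$. It then matches $p_0,\dots,p_q$ with the coefficients of $Q(z)=(1-z)^q\Phi_m(z)$ by a finite Chu--Vandermonde evaluation. Finally it recovers the top coefficients from $z^{3q+1}P(1/z)=-P(z)$, concluding $P(z)=Q(z)-z^{3q+1}Q(1/z)$. The ``gap plus low coefficients'' step is exactly your congruence modulo $t^{2m-1}$, so the two proofs differ in how that congruence is obtained and in how it is upgraded to the full identity.

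For the upgrade, you use the $t\mapsto 1-t$ symmetry of the cleared defect $D$ and the degree count $4m-2>3m-2$. This avoids reconstructing the upper coefficients and substituting back into $Q(1/z)$. For the congruence, you use a classical transformation at a generic lower parameter $c$, which gives the low coefficients and the gap at once. The gap is the vanishing of $(1-m)_k$ for $m\le k\le 2m-2$, and $2m-1$ is the first $k$ with $(2-2m)_k=0$. The paper's route is more elementary and self-contained (one recurrence and one Vandermonde sum, convenient for the Lean formalization). Yours is shorter and more conceptual. Its cost is invoking the Pfaff and Euler transformations as identities in $\Q(c)[[t]]$ that may be specialized in the numerator parameters.

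One correction to Step 3: the displayed Euler output is wrong as written. With $(a,b)=(1-m,c-m)$ one has $c-a-b=2m-1$ for every $c$. Euler then gives exactly $(1-t)^{1-m}(1-t)^{2m-1}\,{}_2F_1(c+m-1,m;c;t)=(1-t)^m\,{}_2F_1(c+m-1,m;c;t)$, with no correction factor. Equivalently, this is the single Pfaff transformation ${}_2F_1(a,b;c;z)=(1-z)^{-b}\,{}_2F_1(c-a,b;c;z/(z-1))$.

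The upper parameter $c+m-1$ becomes $1-m$ only at $c=2-2m$, so for generic $c$ the terms with $j\ge m$ do not vanish, contrary to your parenthetical. What your argument needs, and what holds, is that the coefficients of $t^j$ for $j\le 2m-2$ are regular at $c=2-2m$. They specialize to $(1-m)_j(m)_j/((2-2m)_j\,j!)$, which is zero for $m\le j\le 2m-2$. With that fix the specialization step goes through as you describe.
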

\begin{proof}
The case $m=1$ is immediate. Set $q=m-1\ge1$. Dividing the coefficients of $h_m$ by $b_m$ gives
$(-q)_a(q+1)_a/((-2q)_a a!)$, where $(c)_a$ denotes a rising factorial. Comparing consecutive coefficients proves
\[
 z(1-z)h_m''+(-2q-2z)h_m'+q(q+1)h_m=0.
\]
For $P(z)=(1-z)^{2q+1}h_m(z)=\sum p_kz^k$, substitution gives
\[
 z(1-z)P''+(-2q+4qz)P'-q(3q+1)P=0,
 \qquad (k+1)(k-2q)p_{k+1}=(k-q)(k-3q-1)p_k.
\]
At $k=q$ and then $q<k<2q$, this implies $p_{q+1}=\cdots=p_{2q}=0$. No division by $k-2q$ is made at $k=2q$.
Let $Q(z)=(1-z)^qh_m(z/(z-1))$, a polynomial of degree at most $q$. For $0\le k\le q$,
\[
 [z^k]Q=(-1)^k\sum_{a=0}^k[z^a]h_m\binom{q-a}{k-a}
 =b_m\frac{(-q)_k(-3q-1)_k}{(-2q)_k k!}.
\]
Here the finite Chu--Vandermonde evaluation follows by substituting the displayed hypergeometric coefficients and cancelling factorials; equivalently it is the coefficient form of $(1+z)^u(1+z)^v=(1+z)^{u+v}$, continued as a polynomial identity in $u,v$. The same expression follows for $p_k$ from its recurrence and $p_0=b_m$. Finally, reciprocity of $h_m$ gives $z^{3q+1}P(1/z)=-P(z)$. The coefficient gap therefore implies
$P(z)=Q(z)-z^{3q+1}Q(1/z)$. Substituting the definition of $Q$ proves \eqref{eq:fractional}.
\end{proof}
We will also need, for every integer $p$ and $d\ge0$,
\begin{equation}\label{eq:coeff-transform}
 [t^d](1-t)^{-p}h\left(\frac t{t-1}\right)
 =(-1)^d[z^d](1-z)^{d+p-1}h(z).
\end{equation}
It is enough to check $h(z)=z^a$: both sides are zero for $a>d$, and otherwise both equal $(-1)^a\binom{p+d-1}{d-a}$, with generalized binomial coefficients.

\subsection{The three-term recurrence}
Set $\beta_2=2$ and, for $m\ge3$,
\[
 \beta_m=\frac{b_{m-1}}{b_m}
 =\frac{3(3m-4)(3m-2)}{4(2m-3)(2m-1)}.
\]
\begin{lemma}\label{lem:recurrence}
For $m\ge3$,
\begin{equation}\label{eq:recurrence}
 \beta_m(z-1)^2h_m(z)=z^2h_{m-2}(z)
 +\frac{(z-2)(z+1)(2z-1)}2h_{m-1}(z).
\end{equation}
\end{lemma}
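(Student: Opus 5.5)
The identity is a statement about polynomials of degree at most $m$ in $z$, so the plan is to compare coefficients, using the explicit hypergeometric form of the coefficients from the proof of Lemma~\ref{lem:fractional}. Write $h_m(z)=b_m\sum_{a}c^{(m)}_a z^a$, where
\[
 c^{(m)}_a=\frac{(-q)_a(q+1)_a}{(-2q)_a\,a!},\qquad q=m-1 .
\]
Divide \eqref{eq:recurrence} by $b_m$ and use $\beta_m b_m=b_{m-1}$ and $b_{m-2}/b_m=\beta_m\beta_{m-1}$. The identity becomes
\[
 \beta_m\,\tilde h_m\,(z-1)^2=\beta_m\beta_{m-1}\,z^2\tilde h_{m-2}+\tfrac12\beta_m(z-2)(z+1)(2z-1)\,\tilde h_{m-1},
\]
where $\tilde h_k=h_k/b_k$ is normalized to have constant term $1$. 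After cancelling $\beta_m$ we get
\[
 (z-1)^2\tilde h_m=\beta_{m-1}z^2\tilde h_{m-2}+\tfrac12(z-2)(z+1)(2z-1)\tilde h_{m-1}.
\]
When $m=3$ we read $\beta_2=2$ and $\tilde h_1=1$; this convention makes the formula for $b_{m-2}/b_m$ consistent with $b_1=1$, $b_2=1/2$, $b_3=2/7$.

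\textbf{Step 1: reduce to a coefficient identity.} Take the coefficient of $z^k$ on both sides. Since $(z-2)(z+1)(2z-1)=2z^3-3z^2-3z+2$, this gives a linear relation among the $c^{(m)}$, $c^{(m-1)}$ and $c^{(m-2)}$ at indices $k,k-1,k-2,k-3$. Every such term is a ratio of Pochhammer symbols in $a$ and $q$. Dividing through by a common term, say $c^{(m-1)}_{k}$, turns it into a rational-function identity in $k$ and $q$, which can be checked by clearing denominators. The constant term is a first sanity check: $1=\tfrac12\cdot 2\cdot 1$. By the reciprocity $h_m(z)=z^{m-1}h_m(1/z)$ and the symmetry of both sides under $z\mapsto 1/z$ (with the appropriate powers), only about half the coefficients need checking. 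In practice the direct rational verification covers all $k$ anyway.

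\textbf{Step 2 (alternative, cleaner route).} Use the differential equation
\[
 z(1-z)h''+(-2q-2z)h'+q(q+1)h=0
\]
from Lemma~\ref{lem:fractional}. Both sides of the claimed identity are polynomials of degree at most $q+1$. Contiguity relations for ${}_2F_1(-q,q+1;-2q;z)$ should express $\tilde h_{m-1}$ and $\tilde h_{m-2}$ as combinations of $\tilde h_m$ and $\tilde h_m'$ with rational coefficients. Substituting these, the identity reduces to a first-order statement that follows from the ODE. I would try this route only if Step 1 became unwieldy.

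The main obstacle is the bookkeeping in Step 1. The index shifts mix three different values of $q$, and the lower parameter $-2q$ means the Pochhammer ratios must be handled at the top indices, where factors like $(-2q)_a$ would otherwise vanish or change sign. I would handle the edge coefficients $k\in\{0,1,q,q+1\}$ by hand and apply the generic rational identity in the range $2\le k\le q-1$, where every Pochhammer ratio is nonzero and finite.
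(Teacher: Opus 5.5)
Your plan is essentially the paper's proof: the same normalization to $(1-z)^2p_m=(1-\tfrac32z-\tfrac32z^2+z^3)p_{m-1}+\beta_{m-1}z^2p_{m-2}$, then coefficient comparison turned into a two-variable rational identity by dividing by one coefficient and clearing denominators. The paper pivots on $c_{m,a}$, uses reciprocity to restrict to $a\le\lfloor(m+1)/2\rfloor$, checks $m=3,4$ directly, and records the cleared polynomial identity as a certificate, so your Step~2 is not needed.

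There is one bookkeeping slip to fix. Both sides have degree $m+1=q+2$, not $m$ or $q+1$, so the top coefficient of $z^{q+2}$ must also be covered; it reads $1=1$, or follows from $k=0$ by reciprocity. Also, at $k=2$ the shifted coefficient $c^{(m-1)}_{k-3}$ is zero rather than nonzero, which the rational formula correctly produces through its factor $k-2$.
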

\begin{proof}
Write $p_m=h_m/b_m=\sum c_{m,a}z^a$, where
\[
 c_{m,a}=\frac{(m+a-1)!(2m-a-2)!}{a!(m-a-1)!(2m-2)!}\quad(0\le a<m),
\]
and extend $c_{m,a}$ by zero outside this range. The assertion becomes
\begin{equation}\label{eq:normalized-rec}
 (1-z)^2p_m=(1-\tfrac32z-\tfrac32z^2+z^3)p_{m-1}
 +\beta_{m-1}z^2p_{m-2}.
\end{equation}
For $m=3,4$ this follows on inserting $p_1=1$, $p_2=1+z$, $p_3=1+\frac32z+z^2$, and $p_4=1+2z+2z^2+z^3$. For $m\ge5$, both sides are reciprocal with respect to degree $m+1$. It suffices to compare $0\le a\le\lfloor(m+1)/2\rfloor$. Appendix~\ref{app:certificate} gives the seven coefficient ratios and their nonzero denominators; the full polynomial certificate is displayed in Supplementary Material, Section~S.1. That calculation includes $a=0,1,2$, where truncated coefficients vanish. It proves \eqref{eq:normalized-rec} for all coefficients.
\end{proof}

\subsection{An adjacent-order identity}
Put
\[
 a(z,w)=1-z+zw,\qquad \rho(z)=\frac{z-1}{z},\qquad
 \eta(z)=\frac z{z-1},\qquad
 \tau(z)=-\frac{(z-2)(z+1)(2z-1)}{2z(z-1)}.
\]
Then $\rho^3(z)=z$, $\tau(\rho z)=\tau(z)$, $\tau(\eta z)=-\tau(z)$, and
\begin{equation}\label{eq:tau-diff}
 \tau(w)-\tau(z)=-\frac{(w-z)a(z,w)a(w,z)}{zw(z-1)(w-1)}.
\end{equation}
These identities are obtained by clearing the displayed denominators and multiplying polynomials.
\begin{lemma}\label{lem:adjacent}
For $m\ge2$,
\begin{equation}\label{eq:adjacent}
 h_m(\rho z)h_{m-1}(z)+\frac{(z-1)^2}{z}h_m(z)h_{m-1}(\rho z)
 =b_m z^{m-2}(z^2-z+1).
\end{equation}
\end{lemma}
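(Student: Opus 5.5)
Define the left side of \eqref{eq:adjacent} as
\[
 D_m(z)=h_m(\rho z)h_{m-1}(z)+\frac{(z-1)^2}{z}h_m(z)h_{m-1}(\rho z).
\]
We prove \eqref{eq:adjacent} by induction on $m$, with Lemma~\ref{lem:recurrence} as the inductive engine.

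\textbf{Base cases.} For $m=2$ and $m=3$ we substitute $h_1=1$, $h_2=(1+z)/2$ and $h_3=(2+3z+2z^2)/7$ and check directly. For example, at $m=2$,
\[
 h_2(\rho z)=\frac{2z-1}{2z},
\]
and the left side becomes
\[
 \frac{2z-1}{2z}+\frac{(z-1)^2(1+z)}{2z}=\frac{z^3-z^2+z}{2z}=\frac{z^2-z+1}{2}=b_2\,(z^2-z+1).
\]
The case $m=3$ is a similar short calculation.

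\textbf{Rewriting the recurrence.} Divide \eqref{eq:recurrence} by $z^2$ and use the definition of $\tau$. This gives
\[
 \beta_m\frac{(z-1)^2}{z^2}h_m(z)=h_{m-2}(z)-\frac{z-1}{z}\,\tau(z)\,h_{m-1}(z),
\]
that is,
\[
 \beta_m\rho(z)^2h_m(z)=h_{m-2}(z)-\rho(z)\tau(z)h_{m-1}(z).
\]
We then apply this at the argument $\rho z$, using $\tau(\rho z)=\tau(z)$. This expresses $h_m(\rho z)$ through $h_{m-1}(\rho z)$ and $h_{m-2}(\rho z)$, with coefficient $\rho(\rho z)^{-2}=\rho^2(z)^{-2}$ coming from the powers of $\rho$.

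\textbf{Inductive step.} Substitute both expressions, for $h_m(z)$ and for $h_m(\rho z)$, into $\beta_m D_m$. The terms proportional to $\tau(z)\,h_{m-1}(z)h_{m-1}(\rho z)$ should cancel. This is the reason $\tau$ must be $\rho$-invariant, and the prefactor $(z-1)^2/z=\rho(z)^2 z$ is designed so that the two $\tau$ contributions carry equal and opposite weights. What remains is a multiple of the mixed expression
\[
 h_{m-1}(\rho z)h_{m-2}(z)\cdot(\ldots)+h_{m-1}(z)h_{m-2}(\rho z)\cdot(\ldots),
\]
which should equal $z\,D_{m-1}(z)$ up to a power of $z$ and a sign. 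We would then obtain $\beta_m D_m=z\,D_{m-1}$, and hence
\[
 D_m=\frac{b_m}{b_{m-1}}\,z\,b_{m-1}z^{m-3}(z^2-z+1)=b_m z^{m-2}(z^2-z+1),
\]
as claimed.

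\textbf{Main obstacle.} The hardest step is the bookkeeping of the $\rho$-powers: $\rho(\rho z)=1/(1-z)$, so $\rho(\rho z)^2=1/(1-z)^2$, and we must check that these exactly match the weight $(z-1)^2/z$. If the naive substitution leaves a cross term that does not cancel, we would first try symmetrizing. Applying the identity at $z$, $\rho z$ and $\rho^2 z$ gives three relations, which we would combine using $\rho^3=\mathrm{id}$.

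\textbf{Fallback.} If the induction does not close, we would argue directly. Multiply \eqref{eq:adjacent} by $z^{m-1}$ so that both sides are polynomials of degree at most about $2m$. Reciprocity $h_m(z)=z^{m-1}h_m(1/z)$ shows that the left side is palindromic. Lemma~\ref{lem:fractional}, applied to $h_m(\rho z)=h_m(1-1/z)$, then controls its low-order coefficients up to $t^{2m-1}$. Matching those coefficients with $b_m z^{m-2}(z^2-z+1)$ would determine the whole polynomial.
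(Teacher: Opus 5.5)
Your proposal is correct and is essentially the paper's argument. The paper normalizes $g_m=\rho^m h_m$, so that the recurrence reads $\beta_m g_m=g_{m-2}-\tau g_{m-1}$, and shows that the Casoratian $W_m=g_m(z)g_{m-1}(\rho z)-g_{m-1}(z)g_m(\rho z)$ satisfies $\beta_mW_m=-W_{m-1}$; since $D_m=(-1)^{m-1}z^{m-1}(z-1)W_m$, this is exactly your relation $\beta_m D_m=zD_{m-1}$. Your hedged step closes exactly: after substitution the $\tau$-terms are $\mp(1-z)\tau(z)h_{m-1}(z)h_{m-1}(\rho z)$ and cancel, and the remainder $(z-1)^2h_{m-2}(\rho z)h_{m-1}(z)+zh_{m-2}(z)h_{m-1}(\rho z)$ is literally $zD_{m-1}(z)$. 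So the fallbacks are unnecessary, and only the base case $m=2$ is needed, since the recurrence holds for all $m\ge3$.
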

\begin{proof}
Set $g_m(z)=\rho(z)^m h_m(z)$. Equation~\eqref{eq:recurrence} is
$\beta_mg_m=g_{m-2}-\tau g_{m-1}$. Define
$W_m=g_m(z)g_{m-1}(\rho z)-g_{m-1}(z)g_m(\rho z)$.
Applying the recurrence at both arguments and using $\tau\rho=\tau$ yields
$\beta_m W_m=-W_{m-1}$. Its initial value is
$W_2=-(z^2-z+1)/(2z(z-1))$. Since $b_2=1/2$ and $\beta_m=b_{m-1}/b_m$,
\[
 W_m=(-1)^{m-1}b_m\frac{z^2-z+1}{z(z-1)}.
\]
On the other hand substitution of $g_m$ expresses $W_m$ as $(-1)^{m-1}/(z^{m-1}(z-1))$ times the left side of \eqref{eq:adjacent}. This proves the formula. In determinant form it says
\begin{equation}\label{eq:adj-det}
 \left.\det\begin{pmatrix}(z-1)h_m(z)&zh_{m-1}(z)\\
 (w-1)h_m(w)&wh_{m-1}(w)\end{pmatrix}\right|_{w=\rho z}
 =b_mz^{m-2}(z^2-z+1).
\end{equation}
\end{proof}

\section{Reduction to the binomial matrix}\label{sec:candidate}
\begin{proposition}\label{prop:candidate}
For $n\ge1$, $0\le s\le n$ and an indeterminate $\lambda$,
\begin{equation}\label{eq:candidate}
 A_n\det(I_s-\lambda M)=\det(K_n-\lambda E_{n,s}).
\end{equation}
\end{proposition}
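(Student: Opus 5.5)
Both sides of \eqref{eq:candidate} are polynomials in $\lambda$ of degree at most $s$, so I will prove the identity coefficientwise in $\lambda$, which amounts to comparing principal minors. Expanding $\det(K_n-\lambda E_{n,s})$ multilinearly in the last $s$ columns gives
\[
\det(K_n-\lambda E_{n,s})=\sum_{T\subseteq\{r,\dots,n-1\}}(-\lambda)^{|T|}\det (K_n)_{\bar T,\bar T},
\]
where $\bar T$ is the complement of $T$ and $r=n-s$. The right side is $A_n\sum_{S}(-\lambda)^{|S|}\det M_{S,S}$. The plan is therefore to show that the Schur complement of the leading block $(K_n)_{0:r,0:r}$ in $K_n$ equals, up to a diagonal similarity, $M^{-1}$ scaled suitably. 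More precisely, I aim to establish a factorization
\[
K_n=L\begin{pmatrix}K'_r&0\\0&D^{-1}-\widetilde M\end{pmatrix}U
\]
with unitriangular $L$ and $U$ compatible with $E_{n,s}$ (so that $L E_{n,s}U=E_{n,s}$ after absorbing $D$). Here $\widetilde M = DMD^{-1}$ with $D$ diagonal and built from the $b_{m_j}$. Given this, $\det(K_n-\lambda E_{n,s})=\det K'_r\,\det D^{-1}\det(I_s-\lambda M)$, and the constant $\det K'_r\det D^{-1}$ is identified with $A_n$ by setting $\lambda=0$. That reduces to the known evaluation $\det K_n=A_n$, or equivalently to the product $A_r\prod b_{m_j}^{-1}=A_r A_n/A_r$, using $b_m=A_{m-1}/A_m$ telescoping over $m_j=r+1,\dots,n$.

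To obtain the entries, I will compute $M_{ij}$ explicitly. Expanding $1/(1-z-w)=\sum_k(z+w)^k$ and taking residues converts $M_{ij}$ into a bilinear form $\sum_{a,c}u^{(i)}_a\binom{a+c}{a}v^{(j)}_c$. Here $u^{(i)}$ and $v^{(j)}$ are the coefficient vectors of $z^{-1}\cdot z^{-i}(1\pm\cdots)(1-z)^ih_{m_i}$, truncated to negative powers, i.e.\ the coefficients of $(1\pm(-1)^{i+1}z)(1-z)^ih_{m_i}(z)$ of degree $\le i$. The Pascal part $\binom{a+c}{a}$ matches the first term of $K_n$. The remaining two terms $(-1)^i\binom ji-(-1)^j\binom ij$ should arise from the factors $1\pm z$ together with the reflection $z\mapsto z/(z-1)$. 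This is exactly where Lemma~\ref{lem:fractional} and \eqref{eq:coeff-transform} enter: they let me rewrite the truncated coefficient vectors of $(1-z)^ih_{m_i}(z)$ as coefficients of $h_{m_i}(t/(t-1))$ modulo $t^{2m_i-1}$. The signed Pascal matrix then acts on them, turning the antisymmetric binomial terms into contributions of the same form.

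The row vectors $u^{(i)}$, for $i=0,\dots,s-1$ and padded to length $n$, will then serve as the last $s$ rows of $U^{-1}$-type elimination vectors. The key claim is that $u^{(i)}$, extended by a unit in position $r+i$, lies (after scaling by $b_{m_i}$) in the left kernel of the first $r$ columns of $K_n$ modulo the appropriate correction. I will verify this with the recurrence of Lemma~\ref{lem:recurrence} by induction on $m$, or directly through the coefficient gap $p_{q+1}=\dots=p_{2q}=0$ from the proof of Lemma~\ref{lem:fractional}. That gap is precisely the statement that certain Pascal-type sums vanish.

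I expect the main obstacle to be this orthogonality/Schur-complement step. One must show that the $s\times s$ block produced by elimination is exactly $D^{-1}-\widetilde M$, including the identity part, which comes from the unit entry, while the off-diagonal blocks vanish. The normalization by $b_{m_j}$ on columns must also be tracked. If a direct verification proves unwieldy, I would first try the case $s=1$ and use Lemma~\ref{lem:adjacent} to pass from $m$ to $m-1$. That lemma is a two-term Wronskian-type identity, and I expect it to control the interaction between consecutive rows $i$ and $i+1$ and hence the full block structure by induction on $s$.
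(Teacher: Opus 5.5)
\textbf{The central factorization is wrong.} If $L$ is lower and $U$ upper unitriangular with $K_n=L\,\mathrm{diag}(K'_r,Y)\,U$, then $L_{22}YU_{22}$ is the Schur complement $S$ of $K_r=(K_n)_{0:r,0:r}$ in $K_n$. Your condition $LE_{n,s}U=E_{n,s}$ means $L_{22}U_{22}=I$, which forces $L_{22}=U_{22}=I$. Hence $Y=S$ and $\det(K_n-\lambda E_{n,s})=\det K'_r\det(Y-\lambda I_s)$. With $Y=D^{-1}-\widetilde M$ this is not $\det K'_r\det D^{-1}\det(I_s-\lambda M)$, even at $\lambda=0$. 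The block itself is also wrong. For $n=2$, $s=1$ one has $K_2=\left(\begin{smallmatrix}1&2\\0&2\end{smallmatrix}\right)$ and $M=(b_2)=(\tfrac12)$, so $S=2$, whereas $D^{-1}-\widetilde M=2-\tfrac12=\tfrac32$.

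A Schur-complement route needs two things instead. First, $\det(I_s-\lambda S^{-1})=\det(I_s-\lambda M)$, i.e.\ the lower-right $s\times s$ block of $K_n^{-1}$ must have the characteristic polynomial of $M$. Second, you need an independent proof of $\det K_n=A_n$. In the paper that evaluation is the $\lambda=0$ case of this very proposition, so it cannot be quoted as known. The paper's unitriangular factors deliberately do \emph{not} fix $E_{n,s}$. It writes $K_n=C_\sigma^{-1}VD_\sigma^{-1}$ (or $K_n^T$) with $V=(H^F)^T\diag(b_i)H^F$, and then $C_\sigma E_{n,s}D_\sigma=\Pi C^F_*D^F_*\Pi^T$. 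So the $\lambda$-term carries the triangular factors of $M$, and the tail becomes $\diag(b_{m_i}^{-1})-\lambda L_*U_*$ rather than anything of the form $Y-\lambda I_s$.

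\textbf{The identification itself is not carried out.} Beyond the structural error, the actual computation is only announced. The claims that the antisymmetric terms ``should arise'' from $1\pm z$ and the reflection, and that $u^{(i)}$ lies in a left kernel ``modulo the appropriate correction'', are too vague to check. You flag them yourself as the main obstacle. The paper's proof consists of exactly these missing steps:
\begin{enumerate}
\item[(i)] the triangular factorization $M=\diag(b_{m_i})\ell u$, from $1/(1-z-w)=\sum_k z^kw^k(1-z)^{-k-1}(1-w)^{-k-1}$; your Pascal bilinear form is this before splitting $\binom{a+c}{a}$ into triangular factors;
\item[(ii)] the explicit LU factorization $P-J_+=\mathcal L\,Z^{-1}(I-J_+)$ with $Z_{ij}=[t^{j-i}]\Phi_{j+1}(t)$, giving $\det H^F=A_n$; this is where Lemma~\ref{lem:fractional} and \eqref{eq:coeff-transform} are used;
\item[(iii)] the identification of $(G^F_*)^{-T}C^F_*$ and $D^F_*(G^F_*)^{-1}$ with $\varepsilon\ell\varepsilon$ and $\varepsilon u\varepsilon$, or for even $s$ with $\varepsilon u^T\varepsilon$ and $\varepsilon\ell^T\varepsilon$;
\item[(iv)] the generating-function identity $C_\sigma^{-1}VD_\sigma^{-1}=K_n$ or $K_n^T$, via the operator $F$ realizing $f\mapsto x(1-x)^{-2}f(x/(x-1))$, with $F^2=-L_q$ and $FL_g=L_gF$.
\end{enumerate}
Neither Lemma~\ref{lem:recurrence} nor Lemma~\ref{lem:adjacent} enters this proof. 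An induction on $s$ through the adjacent-order identity has no evident mechanism for producing the full $s\times s$ block.
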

\subsection{Triangular factors and the Fischer--Reibnegger expression}
All matrices in this subsection have size $n$. Set $S_{ij}=\delta_{i,j+1}$, $J_+=S^T$, $\Xi_{ii}=(-1)^{i+1}$ and $\mathcal R=(I-J_+)^{-1}$. Define
\[
 P_{ij}=\binom{i+j}{i},\quad B^F_{ij}=(-1)^{i+j}P_{ij},\quad
 (L_B)_{ij}=(-1)^{i+j}\binom ij,\quad U_B=L_B^T.
\]
Finite Vandermonde summation gives $B^F=L_BU_B$. Write $J_++B^F=L^FU^F$, with $L^F$ unit lower triangular; existence and nonzero pivots follow from the construction below. In the notation corresponding to \cite[Remark 5 and Conjecture 6]{FR}, put
\begin{align*}
 H^F&=\Xi U^F\Xi\mathcal R,&G^F&=\diag((H^F_{ii})^{-1})H^F,\\
 C^F&=I+(-1)^n\Xi L_BS,&D^F&=I+(-1)^{n+1}J_+U_B\Xi=2I-(C^F)^T.
\end{align*}
We claim
\begin{equation}\label{eq:Hinv}
 ((H^F)^{-1})_{ij}=[t^{j-i}]\Phi_{j+1}(t)\quad(i\le j),\qquad
 H^F_{ii}=b_{i+1}^{-1},\qquad \det H^F=A_n.
\end{equation}
Here and below an upper-triangular entry is zero if $i>j$.
To prove the claim, set $\mathcal T=P-J_+$ and $\mathcal W=\mathcal T\mathcal R$; then
$\mathcal W_{aj}=\binom{a+j+1}{a+1}-[j>a]$.
Let $Z_{ij}=[t^{j-i}]\Phi_{j+1}$. For $m\ge1$,
\[
 (\mathcal W Z)_{a,m-1}
 =[t^{m-1}]\frac{\Phi_m(t)}{(1-t)^{a+2}}
 -[t^{m-a-2}]\frac{\Phi_m(t)}{1-t}.
\]
For $a<m-1$, Lemma~\ref{lem:fractional}, \eqref{eq:coeff-transform} and reciprocity make these coefficients equal. For $a=m-1$ the expression is $h_m(1)=1$. Thus $\mathcal WZ$ is unit lower triangular. Since $Z$ has nonzero diagonal $b_m$, this constructs the claimed LU decomposition of $\mathcal T$, hence that of $J_++B^F=\Xi\mathcal T\Xi$, and proves \eqref{eq:Hinv}.

Let a subscript $*$ mean restriction to $r:n$, where $r=n-s$. Define lower and upper triangular $s\times s$ matrices
\begin{align*}
 \ell_{ij}&=b_{m_i}^{-1}[z^{i-j}](1-z)^{i-j-1}
 (1+(-1)^{i+1}z)h_{m_i}(z) &&(i\ge j),\\
 u_{ij}&=b_{m_j}^{-1}[z^{j-i}](1-z)^{j-i-1}
 (1+(-1)^jz)h_{m_j}(z) &&(i\le j).
\end{align*}
The identity
\[
 \sum_{k\ge0}\frac{z^kw^k}{(1-z)^{k+1}(1-w)^{k+1}}=\frac1{1-z-w}
\]
shows that $M=\diag(b_{m_i})\ell u$; the entry $(i,j)$ uses only $k\le\min(i,j)$. Let $\varepsilon_{ii}=(-1)^{i+1}$, $L_*=(G^F_*)^{-T}C^F_*$, and $U_*=D^F_*(G^F_*)^{-1}$. Their entries give
\begin{equation}\label{eq:parityLU}
 (L_*,U_*)=
 \begin{cases}(\varepsilon\ell\varepsilon,\varepsilon u\varepsilon),&s\text{ odd},\\
 (\varepsilon u^T\varepsilon,\varepsilon\ell^T\varepsilon),&s\text{ even}.
 \end{cases}
\end{equation}
With $d=i-j$ and $m=r+i+1$, the lower entry is
\[
 (L_*)_{ij}=b_m^{-1}\left([t^d]\Phi_m(t)
 +(-1)^{s+j}[t^{d-1}](1-t)^{-(m-d+1)}\Phi_m(t)\right),
\]
obtained from $(C^F_*)_{ij}=\delta_{ij}+(-1)^{s+j}\binom{r+i}{r+j+1}$ and \eqref{eq:Hinv}. Apply \eqref{eq:fractional} to the first term and \eqref{eq:coeff-transform} to the second. Here $d\le i\le m-1$, so the discarded term has strictly higher degree. The upper entries follow by transposing and using $D^F=2I-(C^F)^T$. This proves \eqref{eq:parityLU}, including its parity switch, and hence
\begin{equation}\label{eq:FRcandidate}
 A_n\det(I_s-\lambda M)=A_r\det((G^F_*)^TH^F_*-\lambda C^F_*D^F_*).
\end{equation}
Multiplication by $(G^F_*)^{-T}$ and $(G^F_*)^{-1}$ reduces the matrix to $\diag(b_{m_i}^{-1})-\lambda L_*U_*$. Equation~\eqref{eq:parityLU}, determinant invariance under transpose, and $\det(I-AB)=\det(I-BA)$ give the asserted expression. This establishes the equivalence with \cite{FR}, including the deformation parameter.

\subsection{Eliminating the triangular factors}
Put $Q_+=I-J_++J_+^2$. The matrix $\mathcal T$ has generating function
\[
 \frac{1-y+y^2}{(1-x-y)(1-xy)}.
\]
Right multiplication by $Q_+^{-1}$ removes the numerator in this finite upper-triangular coefficient calculation, giving a symmetric matrix. Uniqueness of the triangular decomposition of a symmetric matrix, together with \eqref{eq:Hinv}, gives
\[
 G^F=\widetilde L^TQ_+\mathcal R,
 \qquad \widetilde L=\Xi L^F\Xi.
\]
Consequently $V=(G^F)^TH^F=(H^F)^T\diag(b_i)_{i=1}^nH^F$ has coefficient generating function
\begin{equation}\label{eq:Vgf}
 \frac{(1-x+x^2)(1-y+y^2)}{(1-x)(1-y)(1-x-y)(1-xy)}.
\end{equation}
Its leading $r\times r$ determinant is $A_r$, and its Schur complement is $(G^F_*)^TH^F_*$. Equation~\eqref{eq:FRcandidate} becomes
$\det(V-\lambda\Pi C^F_*D^F_*\Pi^T)$, where $\Pi$ embeds the last $s$ coordinates.

To eliminate $V$, let $F_{ab}=(-1)^b\binom a{b+1}$, a strictly lower triangular matrix. On coefficient vectors its generating-series action is
\[
 (Ff)(x)=\frac{x}{(1-x)^2}f\left(\frac{x}{x-1}\right).
\]
Write $q(x)=x^2/(1-x)$, $g=1+q$, and let $L_g$ denote multiplication by $g$, truncated at degree $n$. Substitution gives $F^2=-L_q$ and $FL_g=L_gF$. If $W$ has generating function $((1-x-y)(1-xy))^{-1}$, then $V=L_gWL_g^T$. For $\sigma=\pm1$ set $C_\sigma=I+\sigma F$, $D_\sigma=I-\sigma F^T$. These are unit triangular, and
\[
 C_\sigma^{-1}VD_\sigma^{-1}=(I-\sigma F)W(I+\sigma F)^T.
\]
The three terms needed to evaluate the right side have generating functions
\[
 F_xW=\frac{x}{(1-x+xy)(1-y+xy)},\quad
 F_yW=\frac{y}{(1-x+xy)(1-y+xy)},\quad F_xF_yW=xyW.
\]
Thus the result has generating function
\[
 \frac1{1-x-y}+\sigma\left(\frac1{1-y+xy}-\frac1{1-x+xy}\right),
\]
which gives $K_n$ for $\sigma=1$ and $K_n^T$ for $\sigma=-1$. In each of these products the triangular factors ensure that an entry of index less than $n$ uses only indices less than $n$, so the calculations hold for the stated matrix size.
For $\sigma=(-1)^n$, $C_\sigma=C^F$, $D_\sigma=D^F$, and
$C_\sigma E_{n,s}D_\sigma=\Pi C^F_*D^F_*\Pi^T$. This proves Proposition~\ref{prop:candidate}, and at $\lambda=0$ gives
\begin{equation}\label{eq:detK}\det K_n=A_n.\end{equation}

\subsection{The forbidden range and the corner convention}
If the top-left square is frozen, the $s$ positions in row $s$ of the monotone triangle all lie among $s+1,\ldots,n$. Such a strictly increasing row requires $s\le n-s$, so $B_{n,s}=0$ for $2s>n$. Equivalently, after reflecting the ASM vertically, row $n-s$ must begin with $1,\ldots,s$.
For the candidate let $T_{ij}=(-1)^j\binom ij$. Finite binomial inversion gives $T^2=I$, $TT^T=(\binom{i+j}{i})$, and
\[
 K_n-I=(T+I)(T^T-I).
\]
Since $T$ is triangular and an involution in characteristic zero, $\rank(T^T-I)=\lfloor n/2\rfloor$. Hence
$\rank(K_n-E_{n,s})\le\lfloor n/2\rfloor+n-s<n$ when $2s>n$.

For the bottom-left convention, the equivalent monotone-triangle condition fixes the entries $1,\ldots,s$ in row $n-s$ and in every row below it, including the original bottom row. When deleting the last $s$ entries of the corresponding northeast diagonals one must retain the fixed new bottom boundary. A description fixing only the original bottom $s$ entries of a diagonal misses this condition. The zero-square definition used here agrees with \cite[Conjecture 6]{FR}.

\section{From frozen-corner enumeration to Pfaffians}\label{sec:integral}
\subsection{The known integral and its normalization}
Suppose $2s\le n$, and set $r=n-s$, $d=n-2s$. Define
\[
 F_k^{r,d}(z)=(z-1)^{d-1-k}z^kh_{r-k}(z)\quad(0\le k<d),\qquad
 \Delta_{r,d}(z)=\det[F_k^{r,d}(z_i)]_{i,k=0}^{d-1},
\]
and $h_{r,d}(z)=\Delta_{r,d}(z)/\prod_{i<j}(z_i-z_j)$. The determinant is the transpose of the numerator in \cite[(2.8)]{CP24}; the denominator has the same, descending, orientation.
The external enumeration input is the second multiple-integral representation \cite[(2.9)]{CP24}, which is a proved formula, distinct from the conjectural determinant of that paper. At $t=1$, $\Delta=1/2$ in its physical notation, take its lattice size to be $n$, its $r$ to be $n-s$, and its $s$ to be our $s$. Its two polynomial orders $n-s$ and $r$ coincide, and the number of integrations is $r-s=d$. Since all square-ice configurations have equal weight, multiplying the probability by $A_n$ gives
\begin{equation}\label{eq:true-integral}
 B_{n,s}=\frac{A_r^2}{d!}\Res_{z_1=1}\cdots\Res_{z_d=1}
 \prod_i\frac1{z_i-1}
 \prod_{i\ne j}\frac{z_i-z_j}{a(z_i,z_j)}
 h_{r,d}(z)h_{r,d}(\eta z).
\end{equation}
In detail, the prefactor before simplification is
\[
 \frac{A_n}{d!}\frac{\prod_{j=n-s+1}^n b_j}{\prod_{j=1}^r b_j}
 =\frac{A_n}{d!}\frac{A_r/A_n}{1/A_r}=\frac{A_r^2}{d!}.
\]
All residues below are taken on fixed, small positively oriented circles about $1$, in a common neighbourhood where the paired denominators are nonzero since $a(1,1)=1$. Substitution of $\eta$ produces only finite-order single-variable poles. The residues are therefore ordinary Laurent coefficients and may be evaluated in any order.

\subsection{Middle columns and the Pfaffian identity}
Equation~\eqref{eq:recurrence} gives
\begin{equation}\label{eq:middle}
 F_{k+2}^{r,d}=\tau F_{k+1}^{r,d}+\beta_{r-k}F_k^{r,d}
 \quad(0\le k\le d-3).
\end{equation}
The smallest recurrence order is $s+3\ge3$. If $d=2m\ge2$, choose $A=F_{m-1}^{r,d}$, $B=F_m^{r,d}$. Then
\begin{equation}\label{eq:even-columns}
 \Delta_{r,2m}=\gamma_e\det[A_i\tau_i^j\mid B_i\tau_i^j]_{j=0}^{m-1},
 \qquad \gamma_e=\prod_{a=1}^{m-1}\beta_{r-m+a+1}^{-(m-a)}.
\end{equation}
The $j$th backward step in \eqref{eq:middle} has leading term
$(-1)^j\tau^jA/\prod_{a=1}^j\beta_{r-m+a+1}$, while forward steps have leading term $\tau^jB$. Elimination of lower powers is triangular. Reversing the left group of columns cancels the product of the backward signs.
For $d=2m+1$, use $A=F_m^{r,d}$, $B=F_{m+1}^{r,d}$ and obtain
\begin{equation}\label{eq:odd-columns}
 \Delta_{r,2m+1}=\gamma_o
 \det[(A_i\tau_i^j)_{j=0}^m\mid(B_i\tau_i^j)_{j=0}^{m-1}],
 \quad \gamma_o=\prod_{a=1}^m\beta_{r-m+a}^{-(m+1-a)}.
\end{equation}
For $m=0$ there is only the column $A$.
For physical sizes $2N$ and $2N+1$, respectively, $r=N+m$ and $r=N+m+1$. Put $b=b_{N+1}$. Telescoping the ratios $\beta_j=b_{j-1}/b_j$ gives
\begin{equation}\label{eq:gamma}
 A_r\gamma_e=A_Nb^{-m},\qquad A_r\gamma_o=A_Nb^{-m-1}.
\end{equation}
For example, the even product is $b_{N+1}^{-(m-1)}\prod_{j=N+2}^{N+m}b_j$; the odd product is $b_{N+1}^{-m}\prod_{j=N+2}^{N+m+1}b_j$. Substitution of these products gives both equalities.

We use the convention
$\Pf Z=(2^mm!)^{-1}\sum_{\pi\in S_{2m}}\operatorname{sgn}(\pi)\prod_{j=0}^{m-1}Z_{\pi(2j),\pi(2j+1)}$.
For independent $t_i,A_i,B_i,C_i,D_i$ set
\[
 H_{ij}=\frac{(A_iB_j-A_jB_i)(C_iD_j-C_jD_i)}{t_j-t_i}.
\]
The following is a homogeneous version of \cite[(1.8)]{IOTZ}:
\begin{equation}\label{eq:pf-product}
 \Pf H=\frac{\det[A_it_i^j\mid B_it_i^j]_{j=0}^{m-1}
 \det[C_it_i^j\mid D_it_i^j]_{j=0}^{m-1}}{\prod_{i<j}(t_j-t_i)}.
\end{equation}
We prove this identity by induction on $m$. As functions of $t_0$, both sides have only simple poles at the other $t_i$. At $t_0=t_1=t$, replace powers by $(t_i-t)^j$. Expanding along the first two rows leaves the two alternating factors, a factor $\prod_{i\ge2}(t_i-t)$ in each determinant, and the determinants for $2m-2$ points. The squared factors cancel those in the Vandermonde; the two column signs cancel. The residues therefore agree with the Pfaffian expansion by induction. Both sides are $O(t_0^{-1})$ at infinity (degrees $2m-2$ and $2m-1$ on the right). Their difference is zero. The case $m=1$ starts the induction.

The odd version is
\begin{equation}\label{eq:pf-border}
 \frac{\det[(A_it_i^j)_{j=0}^m\mid(B_it_i^j)_{j=0}^{m-1}]
 \det[(C_it_i^j)_{j=0}^m\mid(D_it_i^j)_{j=0}^{m-1}]}
 {\prod_{i<j}(t_j-t_i)}
 =\Pf\begin{pmatrix}H&(A_iC_i)_i\\-(A_iC_i)_i^T&0\end{pmatrix}.
\end{equation}
To obtain it, add a point $t_{2m+1}=T$ with $A=C=0$, $B=D=1$ in the even formula, multiply by $T$, and compare leading terms as $T\to\infty$. The new border is $A_iC_i$ and the signs of the two determinant expansions cancel. These are identities of rational functions; after clearing denominators they allow specialization to the data in \eqref{eq:even-columns}--\eqref{eq:odd-columns}.

Let $V(z)=\prod_{i<j}(z_j-z_i)$ and $q_d=d(d-1)/2$. We have
\[
 V(\eta z)=(-1)^{q_d}V(z)\prod_i(z_i-1)^{-(d-1)},\quad
 V(\tau z)=(-1)^{q_d}V(z)
 \frac{\prod_{i<j}a(z_i,z_j)a(z_j,z_i)}{\prod_i[z_i(z_i-1)]^{d-1}}.
\]
Substitution into \eqref{eq:true-integral} gives
\begin{equation}\label{eq:integral-pf-ready}
 B_{n,s}=\frac{(-1)^{q_d}A_r^2}{d!}\Res_{z_i=1}
 \frac{\Delta_{r,d}(z)\Delta_{r,d}(\eta z)V(z)}{V(\tau z)}
 \prod_i\frac1{z_i^{d-1}(z_i-1)}.
\end{equation}
The second determinant uses $\tau(\eta z)=-\tau(z)$. Its column powers contribute sign $(-1)^{m(m-1)}=1$ in even dimension and $(-1)^{m^2}=(-1)^m$ in odd dimension. The cleared polynomial identity extends the formula to coincident spectral values.

\subsection{The finite kernels and cancellation of paired poles}
Fix $N\ge1$ and $b=b_{N+1}$. Define the polynomials
\begin{equation}\label{eq:uv}
 \begin{split}
 u(x)&=xh_{N+1}(1+x),\qquad v(x)=(1+x)h_N(1+x),\\
 u^*(x)&=x^{N+1}u(1/x)=x^Nh_{N+1}((1+x)/x),\\
 v^*(x)&=x^{N+1}v(1/x)=(1+x)x^Nh_N((1+x)/x).
 \end{split}
\end{equation}
Their degree bounds are $N+1,N,N,N+1$, respectively. Put
\begin{align}
 W(x,y)&=u(x)v(y)-u(y)v(x),&W^*(x,y)&=u^*(x)v^*(y)-u^*(y)v^*(x),\nonumber\\
 \HH_N(x,y)&=\frac{W(x,y)W^*(x,y)}{b^2(y-x)(1+x+xy)(1+y+xy)},\nonumber\\
 \CC_N(x,y)&=\HH_N(x,y)-\frac{x^{2N}}{1+y+xy}+\frac{y^{2N}}{1+x+xy}.\label{eq:Ckernel}
\end{align}
\begin{lemma}\label{lem:ker1}
The function $\CC_N$ is a polynomial of degree at most $2N-1$ in each variable. If $\CC_N=\sum_{i,j<2N}c_{ij}x^iy^j$ and $C^{(N)}=(c_{ij})$, then
\begin{equation}\label{eq:Creflection}
 (C^{(N)})^T=-C^{(N)},\qquad R_{2N}C^{(N)}R_{2N}=-C^{(N)},
\end{equation}
where $R_q$ reverses the $q$ coordinates.
\end{lemma}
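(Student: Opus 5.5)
The plan is to handle the denominators of $\HH_N$ one factor at a time, then get the degree bound from growth at infinity and the two symmetries from the behaviour of $\CC_N$ under $x\leftrightarrow y$ and $(x,y)\mapsto(1/x,1/y)$. The only real input is Lemma~\ref{lem:adjacent}.

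\emph{Polynomiality.} $W$ and $W^*$ are antisymmetric polynomials, so $W(x,y)/(y-x)$ is a polynomial and the factor $y-x$ is harmless. The factors $1+x+xy$ and $1+y+xy$ are irreducible and coprime, and the correction terms in \eqref{eq:Ckernel} are designed to cancel exactly these two poles. By the $x\leftrightarrow y$ antisymmetry below, it suffices to treat $1+y+xy$. I will show that on the curve $1+y+xy=0$ we have
\[
\frac{W(x,y)W^*(x,y)}{b^2(y-x)(1+x+xy)}=x^{2N}.
\]
Then $\HH_N-x^{2N}/(1+y+xy)$ has no pole along that curve, and since the factor is irreducible, divisibility follows.

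To check this identity, put $z=1+x$ and $w=1+y$; the curve becomes $w=\rho(z)$. Since $u(x)=(z-1)h_{N+1}(z)$ and $v(x)=zh_N(z)$, the quantity $W(x,y)$ is exactly the determinant in \eqref{eq:adj-det} with $m=N+1$. Hence it equals $b\,z^{N-1}(z^2-z+1)$.

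For $W^*$, I use $u^*(x)=x^{N+1}u(1/x)$, and similarly for $v^*$. Then $W^*(x,y)=(xy)^{N+1}W(1/x,1/y)$. The point $(1/x,1/y)$ again corresponds to a pair related by a power of $\rho$: indeed $1+1/y=-x$, and one uses $\rho^3=\mathrm{id}$. So Lemma~\ref{lem:adjacent}, applied at the shifted argument and possibly with the rows swapped, evaluates $W^*$ as well.

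The remaining factors on the curve are
\[
1+x+xy=\frac{z^2-z+1}{z},\qquad y-x=-\frac{z^2-z+1}{z}.
\]
Multiplying everything out should give $x^{2N}$. This sign and power bookkeeping, with the $\rho$-orbit chosen correctly for $W^*$, is the step I expect to be the main obstacle. If it comes out with the wrong sign, the fix is to recheck the orientation of the determinant in \eqref{eq:adj-det}.

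\emph{Degree bound.} Fix generic $y$ and let $x\to\infty$. In $x$, $W$ has degree at most $N+1$ and $W^*$ has degree at most $N+1$, while the denominator has degree $3$. Hence $\HH_N=O(x^{2N-1})$. The term $x^{2N}/(1+y+xy)$ is $O(x^{2N-1})$, and $y^{2N}/(1+x+xy)\to0$. So the polynomial $\CC_N$ has degree at most $2N-1$ in $x$, and by symmetry also in $y$.

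\emph{Symmetries.} Under $x\leftrightarrow y$, $\HH_N$ is a product of two antisymmetric factors divided by one antisymmetric factor, so it changes sign. The two correction terms are exchanged with opposite signs. This gives $(C^{(N)})^T=-C^{(N)}$.

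The reflection $R_{2N}C^{(N)}R_{2N}$ has generating function $(xy)^{2N-1}\CC_N(1/x,1/y)$. Using $W(1/x,1/y)=(xy)^{-N-1}W^*(x,y)$ and its counterpart for $W^*$, together with
\[
\frac1y-\frac1x=\frac{x-y}{xy},\qquad 1+\frac1x+\frac1{xy}=\frac{1+y+xy}{xy},
\]
one finds $(xy)^{2N-1}\HH_N(1/x,1/y)=-\HH_N(x,y)$. The same substitution sends $-x^{2N}/(1+y+xy)$ to $-y^{2N}/(1+x+xy)$, and symmetrically for the other correction term. Hence the whole kernel changes sign, which proves \eqref{eq:Creflection}.
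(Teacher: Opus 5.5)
Your proposal is correct and follows essentially the same route as the paper. You evaluate $W$ on $w=\rho(z)$ via \eqref{eq:adj-det}, and $W^*$ via the same identity at the argument $1-z$ with rows interchanged, since $1+1/x=\rho(1-z)$; divisibility by the three coprime irreducible factors, the degree count, and both symmetries then follow exactly as you describe. The bookkeeping you flagged does close: on the curve $W^*=-b(z-1)^{2N}(z^2-z+1)/z^{N+1}$, so the quotient is $(z-1)^{2N}=x^{2N}$ with the correct sign.
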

\begin{proof}
Both alternating factors are divisible by $y-x$, which cancels the diagonal denominator. Write $z=1+x$, $w=1+y$. On $1+y+xy=0$, equivalently $w=\rho(z)$, \eqref{eq:adj-det} gives
\[
 W=bz^{N-1}(z^2-z+1),\qquad
 W^*=-b\frac{(z-1)^{2N}}{z^{N+1}}(z^2-z+1).
\]
For the second equality use $\eta(w)=1-z$, $\eta(z)=\rho(1-z)$ in the same identity and interchange its rows. Since
$w-z=-(z^2-z+1)/z$ and $a(w,z)=(z^2-z+1)/z$, it follows that
\begin{equation}\label{eq:pair-cancel}
 \left.(1+y+xy)\HH_N\right|_{1+y+xy=0}=x^{2N}.
\end{equation}
Interchanging $x,y$ cancels the other paired denominator. The numerator of \eqref{eq:Ckernel}, after a common denominator is taken, is thus divisible separately by $y-x$, $1+x+xy$, and $1+y+xy$. These are pairwise relatively prime irreducibles in the unique factorization domain $\Q[x,y]$, so their product divides the numerator, including at intersections of the divisors. The numerator has degree at most $2N+2$ in each variable and the denominator has degree $3$ in each; hence the bound. Interchange of variables gives skew symmetry. The reciprocal definitions in \eqref{eq:uv}, applied also to both correction terms, give
$\CC_N(1/x,1/y)=-(xy)^{-(2N-1)}\CC_N(x,y)$, proving the reversal identity.
\end{proof}
For odd dimension define
\begin{equation}\label{eq:odd-kernel}
 \OO_N=(1+x)(1+y)\CC_N+x^{2N}-y^{2N}=\sum_{i,j=0}^{2N}o_{ij}x^iy^j,
 \qquad \mathscr V_N(x)=\frac{h_{N+1}(1+x)u^*(x)}{b^2}=\sum_{i=0}^{2N}\nu_ix^i.
\end{equation}
Then $O^{(N)}=(o_{ij})$ is skew symmetric and changes sign under reversal, while $\nu_{2N-i}=\nu_i$. Both factors defining $\mathscr V_N$ have strictly positive coefficients in degrees $0$ through $N$; thus every $\nu_i$ is positive. In particular $\alpha:=\nu_N>0$.
The correction $x^{2N}-y^{2N}$ gives the identity
\begin{equation}\label{eq:odd-tail}
 (1+x)(1+y)\HH_N=\OO_N+\frac{x^{2N+1}}{1+y+xy}-\frac{y^{2N+1}}{1+x+xy},
\end{equation}
since $(1+x)(1+y)=(1+y+xy)+x=(1+x+xy)+y$.

\subsection{Residue moments}
For a linear functional $\mathcal L$ and a skew kernel $H$, expansion of the determinant and Pfaffian proves
\begin{equation}\label{eq:pf-integral}
 \frac1{(2m)!}\mathcal L^{\otimes2m}
 \left(\det[p_i(z_j)]_{i,j=0}^{2m-1}\Pf[H(z_i,z_j)]\right)
 =\Pf[\mathcal L_z\mathcal L_w(p_i(z)H(z,w)p_j(w))].
\end{equation}
Relabeling the variables makes every determinant permutation contribute the same quantity: its sign cancels the Pfaffian permutation sign, accounting for $(2m)!$. Expansion of the remaining Pfaffian gives the right side with its factor $2^mm!$. For $2m+1$ variables with a border $g(z_i)$, the same proof gives the border $\mathcal L(p_i g)$.

Apply this identity to \eqref{eq:integral-pf-ready}, with $p_i(z)=z^i$. In even dimension the common factors of the middle columns at $z=1+x$ are $x^{m-1}z^{m-1}$, and at $\eta z$ they are $z^{m-1}x^{-(N+2m-1)}$. Their product is $z^{d-2}x^{-r}$. After the weight in \eqref{eq:integral-pf-ready} and the spectral difference \eqref{eq:tau-diff} are included, the two-variable moment kernel is
\[
 -b^2\frac{\HH_N(x,y)}{x^ry^r}.
\]
Replacing $z^i=(1+x)^i$ by $x^i$ is a unit triangular change of the moment basis. Its indices are $r-1-i$, $0\le i<2m$, hence the increasing central interval $s:2N-s$ after reversal. The two tails in \eqref{eq:Ckernel} cannot contribute, because all indices are below $2N$ and their denominators are units at $(0,0)$.
The odd two-variable moment kernel is
$-b^2(1+x)(1+y)\HH_N(x,y)/(x^ry^r)$ and the border is $b^2\mathscr V_N(x)/x^r$. Equation~\eqref{eq:odd-tail} shows that its higher tails cannot contribute to indices below $2N+1$.

Put $m=N-s$. The factorial $d!$ cancels in \eqref{eq:pf-integral}. Combining the integral prefactor, the column factors, the signs of the second determinant's powers, the moment factors, and the reversal sign, in that order, gives
\[
\begin{aligned}
 2N:\quad&((-1)^mA_r^2)\gamma_e^2\cdot1\cdot((-1)^mb^{2m})(-1)^m
       =(-1)^mA_N^2,\\
 2N+1:\quad&((-1)^mA_r^2)\gamma_o^2(-1)^m((-1)^mb^{2m+2})(-1)^m
       =A_N^2,
\end{aligned}
\]
where the final equalities use \eqref{eq:gamma}. Hence
\begin{align}
 B_{2N,s}&=(-1)^{N-s}A_N^2\Pf(C^{(N)})_{s:2N-s,s:2N-s},\label{eq:even-pf}\\
 B_{2N+1,s}&=A_N^2\Pf\begin{pmatrix}
 (O^{(N)})_{s:2N+1-s,s:2N+1-s}&(\nu_i)_{i=s}^{2N-s}\\
 -(\nu_i)_{i=s}^{2N-s}{}^T&0
 \end{pmatrix}.\label{eq:odd-pf}
\end{align}
The even $m=0$ case comes directly from the zero-variable formula \eqref{eq:true-integral}; the odd $m=0$ case is a one-variable residue and equals $A_N^2\nu_N$.

\section{Reflection and determinant formulas for the count}\label{sec:fold}
In reflection-paired coordinates (left half followed by reversed right half), a skew matrix changing sign under reversal has the form
$Z=\begin{pmatrix}A&B\\-B&-A\end{pmatrix}$, with $A^T=-A$, $B^T=B$. For $S=\begin{pmatrix}I&I\\I&-I\end{pmatrix}$,
\[
 S^TZS=\begin{pmatrix}0&2(A-B)\\2(A+B)&0\end{pmatrix},\qquad \det S=(-2)^m.
\]
Use $\Pf(S^TZS)=\det S\Pf Z$ and $\Pf\begin{pmatrix}0&T\\-T^T&0\end{pmatrix}=(-1)^{m(m-1)/2}\det T$. Restoring the original right-half order introduces the same reversal sign $(-1)^{m(m-1)/2}$. Thus the Pfaffian in original increasing order is $\det(A+B)$, and $(-1)^m\Pf Z=\det(-A-B)$ in that order.

In the odd bordered case the paired order is left, reversed right, center, border. Write its noncentral blocks as $A,B$, its left-to-center column as $h$, and its border as $(v,v,\alpha)^T$. Its Pfaffian in original increasing order is
\begin{equation}\label{eq:odd-fold}
 \det\begin{pmatrix}-A-B&h\\-2v^T&\alpha\end{pmatrix}.
\end{equation}
Move the center past $m$ right indices and eliminate the final block $\begin{pmatrix}0&\alpha\\-\alpha&0\end{pmatrix}$. The noncentral skew matrix becomes
$\bar Z+(\bar\nu\bar h^T-\bar h\bar\nu^T)/\alpha$, where $\bar h=(h,-h)^T$. Its folded block is $A+B-2hv^T/\alpha$. The center permutation and the even folding signs cancel, and multiplication by $\alpha$ gives \eqref{eq:odd-fold}.
Define, for $0\le i,j<N$,
\begin{equation}\label{eq:true-matrices}
 (D_N)_{ij}=-c_{ij}-c_{i,2N-1-j},\qquad
 \EE_N=\begin{pmatrix}
 (-o_{ij}-o_{i,2N-j})_{i,j<N}&(o_{iN})_{i<N}\\
 (-2\nu_j)_{j<N}&\nu_N
 \end{pmatrix}.
\end{equation}
\begin{theorem}\label{thm:true}
For $N\ge1$ and $0\le s\le N$,
\begin{equation}\label{eq:true-dets}
 B_{2N,s}=A_N^2\det(D_N)_{s:N,s:N},\qquad
 B_{2N+1,s}=A_N^2\det(\EE_N)_{s:N+1,s:N+1}.
\end{equation}
\end{theorem}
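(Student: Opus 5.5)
Theorem~\ref{thm:true} follows by combining the Pfaffian formulas \eqref{eq:even-pf} and \eqref{eq:odd-pf} with the reflection folding of Section~\ref{sec:fold}. The matrices involved are the central principal submatrices of $C^{(N)}$ and $O^{(N)}$, with a border in the odd case. By Lemma~\ref{lem:ker1} and the corresponding remarks after \eqref{eq:odd-kernel}, both are skew symmetric and change sign under reversal. Restricting to the symmetric interval $s:2N-s$ (resp.\ $s:2N+1-s$) preserves these two properties. So the restricted matrix has the block form $Z$ of Section~\ref{sec:fold}, once it is written in reflection-paired coordinates. The left half is $s:N$; the right half is $N:2N-s$ in the even case and $N+1:2N+1-s$ in the odd case, with the centre $N$ kept separate.

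\emph{Even case.} Take $Z=(C^{(N)})_{s:2N-s,s:2N-s}$ and $m=N-s$. The block $A$ has entries $c_{ij}$ for $i,j\in s:N$. The block $B$ pairs left index $i$ with the reversed right index, so its entries are $c_{i,2N-1-j}$. The folding computation gives $(-1)^m\Pf Z=\det(-A-B)$. The right-hand matrix is exactly $(D_N)_{s:N,s:N}$, since $(D_N)_{ij}=-c_{ij}-c_{i,2N-1-j}$. Inserting this into \eqref{eq:even-pf} yields $B_{2N,s}=A_N^2\det(D_N)_{s:N,s:N}$. For $s=N$ both sides equal $A_N^2$, since the Pfaffian and determinant are empty.

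\emph{Odd case.} The reflection of index $i$ is $2N-i$, so $B_{ij}=o_{i,2N-j}$ and the centre column is $h_i=o_{iN}$. The border must have the form $(v,v,\alpha)$. This holds because $\nu_{2N-i}=\nu_i$ gives equal left and right border entries, with $\alpha=\nu_N$. Formula \eqref{eq:odd-fold} then produces the matrix with blocks $-A-B$, $h$, $-2v^T$, $\alpha$. This is precisely $(\EE_N)_{s:N+1,s:N+1}$. Combined with \eqref{eq:odd-pf}, which carries no sign, this gives the claim. For $s=N$ the result is $A_N^2\nu_N$, matching the one-variable residue noted after \eqref{eq:odd-pf}.

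\emph{Main obstacle.} I expect the hardest part to be the sign and orientation bookkeeping. One must check that the centre, the reversed right half, and the border ordering used in \eqref{eq:odd-fold} match the increasing order in \eqref{eq:odd-pf}. One must also check that the restriction to $s:\cdot$ keeps the pairing $i\leftrightarrow 2N-1-i$ (resp.\ $2N-i$) internal to the retained index set. This holds because the interval is symmetric about the reflection centre. I would verify the signs by re-running the permutation counts in Section~\ref{sec:fold} with general $m$. As a sanity check, I would test $N=1$, $s=0$ against the known values $B_{2,0}=A_2=2$ and $B_{3,0}=A_3=7$.
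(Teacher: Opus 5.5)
Your proposal is correct and matches the paper's proof. It folds the central restrictions in \eqref{eq:even-pf} and \eqref{eq:odd-pf} using Section~\ref{sec:fold}, notes that restriction to the symmetric interval commutes with reflection pairing, and keeps the centre and the border $(v,v,\alpha)$ in the odd case. Your block identifications $B_{ij}=c_{i,2N-1-j}$ (resp.\ $o_{i,2N-j}$), and the absorption of the sign $(-1)^{N-s}$ into $\det(-A-B)$, are exactly the bookkeeping the paper's short proof relies on.
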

\begin{proof}
Apply the two folding calculations to the central intervals in \eqref{eq:even-pf} and \eqref{eq:odd-pf}. Restriction to a central interval commutes with pairing the reflected indices. In odd size the center and the final border are retained. This gives the tail matrices in \eqref{eq:true-matrices}.
\end{proof}
Theorem~\ref{thm:true} follows from the enumeration integral and the refined-polynomial and Pfaffian identities, independently of the candidate reduction and the matrix comparison that follows.

\section{The inverse commutator identity}\label{sec:inverse}
\subsection{Finite binomial operators}
For each positive integer $q$ define
\[
 (T_q)_{ij}=(-1)^j\binom ij,\quad
 (\G_q)_{ij}=(-1)^{i+j}\binom{i+j}{i},\quad
 (\U_q)_{ij}=(-1)^{j-i}\binom{q-1-i}{j-i},\quad \B_q=R_q\U_q.
\]
Binomial inversion and Vandermonde summation give
\begin{equation}\label{eq:binomial}
 T_q^2=I,\quad \B_qT_q=R_q\B_q,\quad
 \B_q\B_q^T=\G_q,\quad \det\G_q=1.
\end{equation}
For the middle identity, the required alternating sum is
\[
 \sum_{l=0}^i(-1)^l\binom il\binom{q-1-i+l}{j}
 =[t^j](1+t)^{q-1-i}(1-(1+t))^i
 =(-1)^i\binom{q-1-i}{j-i}.
\]
For the Gram identity the sum is
$(-1)^{i+j}\sum_k\binom i{q-1-k}\binom j{q-1-k}
=(-1)^{i+j}\binom{i+j}i$; the determinant follows from the unit diagonal of $\U_q$. Binomial inversion itself follows from
$\binom ik\binom kj=\binom ij\binom{i-j}{k-j}$ and the alternating binomial sum. Applying \eqref{eq:binomial} to $K-I=(T+I)(T^T-I)$ yields
\begin{equation}\label{eq:K-congruence}
 \B_q(K_q-I)\B_q^T=(R_q+I)\G_q(R_q-I).
\end{equation}

If $f$ has degree less than $q$, multiplication by $\G_q$ on its coefficient column is the finite operator
\begin{equation}\label{eq:G-action}
 f\longmapsto\left[\frac1{1+x}f\left(-\frac1{1+x}\right)\right]_{<q}.
\end{equation}
On $f=x^j$, the coefficient of $x^i$ is $(-1)^{i+j}\binom{i+j}i$. Moreover
\begin{equation}\label{eq:GRG}
 (\G_qR_q\G_q^{-1})_{ij}=
 \begin{cases}(-1)^{q-1+i}\binom{q+i}{i-j},&i\ge j,\\0,&i<j.\end{cases}
\end{equation}
Writing $F$ for the displayed lower triangular matrix, we compute $F\G_q$. The needed sum is
\[
 \sum_{k=0}^i(-1)^k\binom{q+i}{i-k}\binom{k+j}k
 =[t^i](1+t)^{q+i}(1+t)^{-j-1}
 =\binom{q+i-j-1}i.
\]
Its exterior sign $(-1)^{q-1+i+j}$ agrees with $(\G_qR_q)_{ij}$. Since $\G_q$ is invertible, this proves \eqref{eq:GRG}. For even $q$ the coefficient kernel is therefore
\begin{equation}\label{eq:rational-trunc}
 -\left[\frac1{(1+x)^q(1+x+xy)}\right]_{x<q}.
\end{equation}
Expand $(1+x+xy)^{-1}=\sum_{j\ge0}(-xy)^j/(1+x)^{j+1}$; the $x$ truncation also bounds the $y$ degree by $q-1$.
Define
\begin{equation}\label{eq:omega}
 \Omega_q=\G_qR_q-R_q\G_q,\qquad
 \J_q=R_q\G_q^{-1}-\G_q^{-1}R_q.
\end{equation}
Both are skew symmetric and change sign under reversal. We shall prove the invertibility of $\Omega_q$ for even $q$ below.

\subsection{Univariate kernel identities}
Take $q=2N$, $\epsilon=(-1)^{N+1}$, $\theta(x)=-1-x$, and $\sigma(x)=-1/(1+x)$. Lemma~\ref{lem:fractional}, at $t=1+x$, gives
\begin{equation}\label{eq:theta}
 u^*=\epsilon x^qu-(1+x)^qu(\theta x),\qquad
 v^*=-\epsilon x^qv-(1+x)^qv(\theta x).
\end{equation}
For the first formula multiply that lemma for $h_{N+1}$ by $x^N$, and for the second multiply its version for $h_N$ by $(1+x)x^N$, using reciprocity. Lemma~\ref{lem:adjacent} at $z=1+x$ gives
\begin{equation}\label{eq:uvpair}
 uv^*+u^*v=b(1+x)^q(x^2+x+1).
\end{equation}
Before multiplication, the left side of the adjacent-order identity is
\[
 \left(\frac{x}{1+x}\right)^N
 \left[h_{N+1}((1+x)/x)h_N(1+x)
 +xh_{N+1}(1+x)h_N((1+x)/x)\right].
\]
The left side of \eqref{eq:uvpair} is the bracket times $(1+x)x^N$, a relative factor $(1+x)^{N+1}$. This gives the scalar $b$.
The reciprocal polynomials also satisfy the four rational identities
\begin{equation}\label{eq:sigma}
 \begin{aligned}
 u(\sigma x)&=-\frac{u^*(x)}{(1+x)^{N+1}},&
 v(\sigma x)&=\frac{v^*(x)}{(1+x)^{N+1}},\\
 u^*(\sigma x)&=\frac{\epsilon u(\theta x)}{(1+x)^{N+1}},&
 v^*(\sigma x)&=\frac{\epsilon v(\theta x)}{(1+x)^{N+1}}.
 \end{aligned}
\end{equation}
For the first two, substitute $1+\sigma x=x/(1+x)$ and apply reciprocity of $h_{N+1},h_N$. For the last two, substitute $\sigma x$ in the finite reversed expressions in \eqref{eq:uv}; their argument $(1+\sigma x)/(\sigma x)$ is $-x$. The signs from $(\sigma x)^N$ give the two factors $\epsilon$. Hence all four identities hold after clearing denominators, including specialization wherever the cleared expressions are defined.

\subsection{A polynomial with a prescribed reflection defect}
Let
\[
 \mathscr D=(y-x)(1+x+xy)(1+y+xy),\qquad A(x)=u(\theta x),\quad B(x)=v(\theta x),
\]
and set
\begin{equation}\label{eq:Pkernel}
 \mathscr F(x,y)=-\frac{\epsilon[A(x)v(y)+B(x)u(y)]
 [A(x)v^*(y)-B(x)u^*(y)]}{b^2\mathscr D(x,y)},\qquad
 \PP(x,y)=\mathscr F(x,y)+\frac{y^q}{x-y}.
\end{equation}
\begin{lemma}\label{lem:Pkernel}
The function $\PP$ is a polynomial of degree at most $q-1$ in each variable, and
\begin{equation}\label{eq:P-reflection}
 \PP(x,y)+(xy)^{q-1}\PP(1/x,1/y)
 =-\sum_{j=0}^{q-1}x^{q-1-j}y^j.
\end{equation}
Thus its actual coefficient matrix $P$ satisfies $P+R_qPR_q=-R_q$.
\end{lemma}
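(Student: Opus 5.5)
The proof mirrors Lemma~\ref{lem:ker1}. Polynomiality comes from showing that the numerator of $\PP$ over the common denominator $\mathscr D$ is divisible separately by each of the three coprime irreducible factors $y-x$, $1+x+xy$, $1+y+xy$ of $\Q[x,y]$. The reflection identity then follows by substituting $(1/x,1/y)$ and using the rational identities \eqref{eq:theta}, \eqref{eq:sigma} and \eqref{eq:uvpair}.

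\emph{Factor $y-x$.} Write the numerator as $-\epsilon\,\mathcal N(x,y)-b^2(1+x+xy)(1+y+xy)\,y^q$, where $\mathcal N=[A(x)v(y)+B(x)u(y)][A(x)v^*(y)-B(x)u^*(y)]$. At $y=x$ we have
\[
 \mathcal N(x,x)=[A v+Bu][Av^*-Bu^*].
\]
I would rewrite $A,B$ through \eqref{eq:theta}, namely $(1+x)^qA=\epsilon x^qu-u^*$ and $(1+x)^qB=-\epsilon x^qv-v^*$. Then
\[
 (1+x)^q(Av+Bu)=-(uv^*+u^*v)=-b(1+x)^q(x^2+x+1)
\]
by \eqref{eq:uvpair}, and
\[
 (1+x)^q(Av^*-Bu^*)=\epsilon x^q(uv^*+u^*v).
\]
Hence $\mathcal N(x,x)=-\epsilon b^2x^q(1+x+x^2)^2$. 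Since $(1+x+x^2)^2$ is exactly $(1+x+xy)(1+y+xy)$ at $y=x$, the numerator vanishes on the diagonal, as required for the $y^q/(x-y)$ correction.

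\emph{Factor $1+y+xy$, i.e.\ $y=\sigma(x)$.} By \eqref{eq:sigma}, $v(y)$ and $u(y)$ become $\pm v^*(x),u^*(x)$ over $(1+x)^{N+1}$. The first bracket then becomes $(Av^*-Bu^*)(x)/(1+x)^{N+1}$, which is $(1+x)^{-q}\epsilon x^q b(1+x)^q(x^2+x+1)/(1+x)^{N+1}$. The second bracket becomes $\epsilon(A(x)B(x)\ldots)$-type terms through $u^*(\sigma x)=\epsilon A/(1+x)^{N+1}$ and $v^*(\sigma x)=\epsilon B/(1+x)^{N+1}$. That gives $\epsilon(AB-BA)=0$, so $\mathcal N$ vanishes there, and the correction term also vanishes on this divisor.

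\emph{Factor $1+x+xy$, i.e.\ $x=\sigma(y)$.} Here I would need $A(\sigma y)=u(\theta\sigma y)$. Since $\theta\sigma y=-y/(1+y)$, this is a reflected $u$, so it should be expressed via Lemma~\ref{lem:fractional}/\eqref{eq:coeff-transform} in terms of $u^*(y)$ and $u(\theta y)$. The pair of brackets is expected to factor into something proportional to $\mathscr D$'s cofactor times $y^q$ through \eqref{eq:uvpair}, so that the residue cancels $-y^q(1+x+xy)\cdots$. This third divisor is where I expect the main computation. If direct substitution is messy, I would first try to derive the reflection identity below and deduce divisibility by $1+x+xy$ from the already established symmetry. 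Degree bounds then follow by counting: the numerator has degree $\le 2q+2$ or so in each variable and the denominator has degree $3$. After accounting for the leading cancellations, any required extra vanishing would be checked via $y\to\infty$ using $\deg u,v\le N+1$.

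\emph{Reflection identity.} I would compute $\mathscr D(1/x,1/y)=-(xy)^{-3}(y-x)(x+y+1)\cdot(\ldots)$, noting that the factors $1+x+xy$ map to $(xy+y+1)/(xy)$. The next step is to rewrite $A(1/x)$, $u(1/y)$, etc.\ via $u^*,v^*$ and \eqref{eq:theta}. The sum $\mathscr F+(xy)^{q-1}\mathscr F(1/x,1/y)$ should then reduce to a rational function whose only possible pole is at $x=y$. Adding the corrections $y^q/(x-y)+(xy)^{q-1}x^{-q}/(1/x-1/y)$ gives $-(x^q-y^q)/(x-y)=-\sum x^{q-1-j}y^j$. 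Reading coefficients, the right side is the coefficient matrix of $R_q$, and $(xy)^{q-1}\PP(1/x,1/y)$ has coefficient matrix $R_qPR_q$, giving $P+R_qPR_q=-R_q$.
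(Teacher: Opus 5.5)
Your strategy is the paper's: prove divisibility by the three coprime factors of $\mathscr D$, then use inversion. Your checks on $y=x$ and on $1+y+xy=0$ are correct. The proposal does, however, leave two essential steps unproved.

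\textbf{The divisor $1+x+xy$.} You never show that the numerator vanishes on $1+x+xy=0$, and the mechanism you expect is mistaken. The correction term $-b^2y^q(1+x+xy)(1+y+xy)$ already vanishes there, so there is no residue to cancel; what is needed is $\mathcal N=0$. This follows from reciprocity $h_m(1/z)=z^{1-m}h_m(z)$ alone, without Lemma~\ref{lem:fractional}. For $x=\sigma(y)$ one has $1+\theta x=1/(1+y)$, hence $A(x)=-u(y)/(1+y)^{N+1}$ and $B(x)=v(y)/(1+y)^{N+1}$. So the first bracket $A(x)v(y)+B(x)u(y)$ vanishes identically.

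\textbf{The reflection identity.} You only say that $\mathscr F+(xy)^{q-1}\mathscr F(1/x,1/y)$ ``should reduce to a rational function whose only possible pole is at $x=y$.'' Any nonzero remainder of that kind would survive in \eqref{eq:P-reflection}. You therefore need the exact anti-invariance $(xy)^{q-1}\mathscr F(1/x,1/y)=-\mathscr F(x,y)$. The missing observation is that inversion interchanges the two brackets. Use $A(1/x)=-\epsilon x^{-N-1}A(x)$, $B(1/x)=\epsilon x^{-N-1}B(x)$, $u(1/y)=y^{-N-1}u^*(y)$, $v(1/y)=y^{-N-1}v^*(y)$, together with the same relations with $u\leftrightarrow u^*$ and $v\leftrightarrow v^*$. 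Then the first bracket becomes $-\epsilon(xy)^{-N-1}[Av^*-Bu^*]$ and the second becomes $-\epsilon(xy)^{-N-1}[Av+Bu]$. Also $\mathscr D(1/x,1/y)=-(xy)^{-3}\mathscr D(x,y)$; your partial formula with a factor $x+y+1$ is not right. Once this anti-invariance is established, your fallback for the third divisor does work, because $1+y+xy$ evaluated at $(1/x,1/y)$ equals $(1+x+xy)/(xy)$.

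\textbf{The degree bound.} Your degree count does not yield the claimed bound. The degrees of $u,v,u^*,v^*$ are at most $N+1,N,N,N+1$, and those of $A,B$ at most $N+1,N$. So each bracket has degree at most $N+1$ in each variable, and the full numerator $-\epsilon\mathcal N-b^2y^q(1+x+xy)(1+y+xy)$ has degree at most $q+2$ in each variable, not ``$2q+2$ or so.'' Dividing by $\mathscr D$, of degree $3$ in each variable, gives $q-1$ directly, with no analysis as $y\to\infty$. With your count you would get only $2q-1$.

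\textbf{A minor slip.} The inverted correction is $(xy)^{q-1}y^{-q}/(1/x-1/y)=x^q/(y-x)$, not the expression with $x^{-q}$. With that correction, your final sum $-(x^q-y^q)/(x-y)$ and the conclusion $P+R_qPR_q=-R_q$ are right.
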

\begin{proof}
At $x=-1/(1+y)$, reciprocity gives $A(x)=-u(y)/(1+y)^{N+1}$ and $B(x)=v(y)/(1+y)^{N+1}$, so the first bracket vanishes. At $x=-(1+y)/y$ we have $\theta x=1/y$, so the second bracket vanishes. At $x=y$, \eqref{eq:theta} and \eqref{eq:uvpair} make the two brackets respectively $-b(y^2+y+1)$ and $\epsilon by^q(y^2+y+1)$. Hence $\mathscr F$ has residue $-y^q$ with respect to $x$ along the diagonal, canceled by the second term in \eqref{eq:Pkernel}. Clearing denominators and using the same three pairwise relatively prime factors as in Lemma~\ref{lem:ker1} proves joint divisibility. Numerator degrees are at most $q+2$, so the quotient has the claimed bounds.
Reciprocity gives
\[
 A(1/x)=-\epsilon x^{-N-1}A(x),\qquad
 B(1/x)=\epsilon x^{-N-1}B(x),
\]
while $u(1/x)=x^{-N-1}u^*(x)$ and similarly for $v$. Substitution in \eqref{eq:Pkernel} gives
\[(xy)^{q-1}\mathscr F(1/x,1/y)=-\mathscr F(x,y).\] The remaining sum is $(y^q-x^q)/(x-y)$, proving \eqref{eq:P-reflection}.
\end{proof}

\subsection{Exact truncation and a right inverse}
Write $W_{\theta,*}=u(\theta x)v^*(y)-u^*(y)v(\theta x)$. Equations~\eqref{eq:theta} and \eqref{eq:sigma} give
\begin{equation}\label{eq:exact-descent}
 \frac1{1+x}\HH_N(\sigma x,y)=\mathscr F(x,y)
 +\frac{x^qW(x,y)W_{\theta,*}(x,y)}{(1+x)^qb^2\mathscr D(x,y)}.
\end{equation}
Here $\mathscr D(\sigma x,y)=-\mathscr D(x,y)/(1+x)^3$, and
\[
 u^*(x)v(y)+u(y)v^*(x)=\epsilon x^qW(x,y)
 -(1+x)^q[A(x)v(y)+B(x)u(y)].
\]
The transformed first alternant is the negative of this expression divided by $(1+x)^{N+1}$; the second is $\epsilon W_{\theta,*}/(1+x)^{N+1}$. Combining these factors proves \eqref{eq:exact-descent}.

This step is interpreted in $\Q(y)[[x]]$. The constant term $\mathscr D(0,y)=y(1+y)$ is a nonzero element of the coefficient field. Therefore the last term of \eqref{eq:exact-descent} has zero coefficient of $x^i$ for every $i<q$. There is no expansion of $1/(x-y)$ at a common two-variable origin. The correction terms in \eqref{eq:Ckernel}, under the same substitution and the factor $(1+x)^{-1}$, become respectively
\[
 -\frac1{(1+x)^q(1+x+xy)},\qquad \frac{y^q}{x-y}.
\]
Using \eqref{eq:G-action}, \eqref{eq:rational-trunc}, and \eqref{eq:Pkernel}, we obtain an equality of finite coefficient matrices:
\begin{equation}\label{eq:GC}
 \G_qC^{(N)}=P+\G_qR_q\G_q^{-1}.
\end{equation}
The degree bounds on both final kernels identify every entry of these $q\times q$ matrices.

\begin{theorem}\label{thm:inverse}
For every $N\ge1$,
\begin{equation}\label{eq:inverse}
 \Omega_{2N}(\J_{2N}-C^{(N)})=I_{2N},\qquad
 \Omega_{2N}^{-1}=\J_{2N}-C^{(N)}.
\end{equation}
\end{theorem}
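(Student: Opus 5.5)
The plan is to obtain $\Omega_{2N}(\J_{2N}-C^{(N)})=I_{2N}$ by pure matrix algebra from three earlier facts: the exact-truncation identity \eqref{eq:GC}, the reflection defect of $P$ in Lemma~\ref{lem:Pkernel}, and the symmetries \eqref{eq:Creflection} of $C^{(N)}$ from Lemma~\ref{lem:ker1}. No further kernel computation should be needed. Write $q=2N$, $\G=\G_q$, $R=R_q$, $C=C^{(N)}$, and use $R^2=I$ throughout.

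\emph{Step 1 (the $\J$ term).} Expand directly:
\[
 \Omega\J=(\G R-R\G)(R\G^{-1}-\G^{-1}R)
 =2I-\G R\G^{-1}R-R\G R\G^{-1}.
\]

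\emph{Step 2 (the $C$ term).} Skew symmetry and $RCR=-C$ give $RC=-CR$. Hence, by \eqref{eq:GC},
\[
 \G RC=-\G CR=-PR-\G R\G^{-1}R,
\]
and, multiplying \eqref{eq:GC} on the left by $R$,
\[
 R\G C=RP+R\G R\G^{-1}.
\]
Subtracting these,
\[
 \Omega C=-PR-RP-\G R\G^{-1}R-R\G R\G^{-1}.
\]
The two conjugation terms are exactly those appearing in Step 1, so they cancel in the difference, leaving
\[
 \Omega(\J-C)=2I+PR+RP.
\]

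\emph{Step 3 (the reflection defect).} Multiply $P+RPR=-R$ on the right by $R$ to get $PR+RP=-I$. Then $\Omega(\J-C)=I$. Since all matrices are $q\times q$ over $\Q$, a right inverse is a two-sided inverse. This gives both the invertibility of $\Omega_{2N}$ and the formula $\Omega_{2N}^{-1}=\J_{2N}-C^{(N)}$.

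The only real obstacle is already handled earlier: \eqref{eq:GC} must hold as an equality of the actual finite $q\times q$ coefficient matrices, not merely modulo truncation. This rests on the degree bounds in Lemmas~\ref{lem:ker1} and~\ref{lem:Pkernel} and on the vanishing of the low-order coefficients of the remainder in \eqref{eq:exact-descent}. Given that, the remaining risk is only sign bookkeeping in Steps 1--2, which I would check against the case $N=1$.
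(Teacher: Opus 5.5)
Your proof is correct and uses the same ingredients as the paper: \eqref{eq:GC}, the reflection defect $P+R_qPR_q=-R_q$, and the reversal antisymmetry $RC^{(N)}R=-C^{(N)}$. The only difference is organizational: you expand $\Omega_{2N}\J_{2N}$ and $\Omega_{2N}C^{(N)}$ separately, whereas the paper first derives $\G X_*+R\G X_*R=-R$ for $X_*=\J-C^{(N)}$ and then multiplies on the right by $-R$.
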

\begin{proof}
Abbreviate $G=\G_q$, $R=R_q$, $C=C^{(N)}$, $J=\J_q$. Equations~\eqref{eq:GC} and \eqref{eq:P-reflection} imply
$GC+RGCR=GRG^{-1}+RGRG^{-1}R-R$. Since $GJ=GRG^{-1}-R$, putting $X_*=J-C$ yields
\[
 GX_*+RGX_*R=-R.
\]
Also $RX_*R=-X_*$ by \eqref{eq:Creflection} and \eqref{eq:omega}. Multiplying this equality on the right by $-R$ gives
$(GR-RG)X_*=I$. Taking determinants in this square matrix identity proves that $\Omega_q$ is invertible, with the stated inverse.
\end{proof}

\section{Odd-dimensional lifting and central projection}\label{sec:odd}
Let $Q$ be the $(2N+1)\times2N$ matrix $Q_{ij}=\delta_{ij}+\delta_{i,j+1}$, and let $e_i=(-1)^i$. Then $Q^Te=0$ and $R_{2N+1}Q=QR_{2N}$. Four-term Pascal addition gives
\begin{equation}\label{eq:compression}
 Q^T\G_{2N+1}Q=\G_{2N},\qquad Q^T\Omega_{2N+1}Q=\Omega_{2N}.
\end{equation}
The inverse Gram matrix has generating polynomial
\begin{equation}\label{eq:Ginverse-poly}
 \sum_{i,j<q}(\G_q^{-1})_{ij}x^iy^j=\sum_{k=0}^{q-1}(1+x)^k(1+y)^k.
\end{equation}
Write $\G_q=DP_qP_q^TD$, where $D_{ii}=(-1)^i$ and $(P_q)_{ij}=\binom ij$. Inverting the two Pascal factors gives $(\G_q^{-1})_{ij}=\sum_{k<q}\binom ki\binom kj$, which gives \eqref{eq:Ginverse-poly}. Multiplication by $(1+x)(1+y)$ shifts the summation index, so
\begin{equation}\label{eq:G-lift}
 \G_{2N+1}^{-1}=Q\G_{2N}^{-1}Q^T+e_0e_0^T.
\end{equation}
Consequently
\[
 \J_{2N+1}=Q\J_{2N}Q^T+e_{2N}e_0^T-e_0e_{2N}^T.
\]
The last two terms are the coefficient matrix of $x^{2N}-y^{2N}$, the correction in \eqref{eq:odd-kernel}. Therefore
\begin{equation}\label{eq:gammaOdd}
 \Gamma_o:=\J_{2N+1}-O^{(N)}=Q\Omega_{2N}^{-1}Q^T.
\end{equation}

Put $p=h_{N+1}/b$. The polynomial defining $\nu$ satisfies
\begin{equation}\label{eq:nu-transform}
 \frac1{1+x}\mathscr V_N(\sigma x)
 =(-1)^Np(-x)^2-
 \frac{x^{2N+1}}{(1+x)^{2N+1}}p(1+x)p(-x).
\end{equation}
By \eqref{eq:uv}, reciprocity and \eqref{eq:sigma}, the left side is
$(-1)^Nu^*(x)p(-x)/(b(1+x)^{2N+1})$; insert the first identity in \eqref{eq:theta}. Since $u(\theta x)=-(1+x)bp(-x)$, this gives the displayed two terms.
Truncating at degree $2N+1$ and applying \eqref{eq:G-action} gives $\G_{2N+1}\nu=(-1)^N\operatorname{coeff}(p(-x)^2)$. The latter vector is reversal invariant, because $p$ is reciprocal and the degree is $2N$. So is $\nu$, and hence
\begin{equation}\label{eq:nu-null}
 \Omega_{2N+1}\nu=0,\qquad e^T\nu=\mathscr V_N(-1)=(-1)^N\ne0.
\end{equation}
The last evaluation follows from $h_{N+1}(0)=b$ and the reversed polynomial $u^*(-1)=(-1)^Nb$.
The compression in \eqref{eq:compression} has rank $2N$, while an odd skew matrix is singular. Thus $\rank\Omega_{2N+1}=2N$ and its kernel is $\Q\nu$.

Equation~\eqref{eq:gammaOdd} implies $Q^T\Omega_{2N+1}\Gamma_o=Q^T$ and $\Gamma_oe=0$. Hence $I-\Omega_{2N+1}\Gamma_o$ has image in $\ker Q^T=\Q e$. Multiplication on the left by $\nu^T$, using skew symmetry and \eqref{eq:nu-null}, determines its coefficient and gives
\begin{equation}\label{eq:rank-one-defect}
 \Omega_{2N+1}\Gamma_o=I-\frac{e\nu^T}{e^T\nu}.
\end{equation}
Let $c=N$ be the central coordinate and $\alpha=\nu_c>0$. Set
\[
 P_c=I-\frac{\nu e_c^T}{\alpha},\qquad \widehat\Gamma=P_c\Gamma_oP_c^T.
\]
The central row and column of $\widehat\Gamma$ are zero. Since $\Omega P_c=\Omega$ and $\nu^TP_c^T=0$, \eqref{eq:rank-one-defect} yields
\[
 \Omega_{2N+1}\widehat\Gamma=I-\frac{e_c\nu^T}{\alpha}.
\]
For noncentral row and column indices the central summand in this product is zero. If a superscript $\circ$ means deletion of the center, it follows that
\begin{equation}\label{eq:deleted-inverse}
 \Omega_{2N+1}^{\circ}\widehat\Gamma^{\circ}=I_{2N},\qquad
 (\Omega_{2N+1}^{\circ})^{-1}=\widehat\Gamma^{\circ}.
\end{equation}
This is an inverse after projection and deletion, not an identification of an inverse principal block with the inverse of a principal block.

\section{The even and odd matrix bridges}\label{sec:connections}
\subsection{Even dimension}
Use reflection-paired row coordinates and write
\[
 G=\begin{pmatrix}A&B\\B^T&D\end{pmatrix},\qquad X=D-A+B-B^T.
\]
Here and in this section block letters are local to the displayed matrices. Split the columns of $\B_{2N}$ in their original order into two groups of size $N$; its paired-row form is
\[
 \B=\begin{pmatrix}0&V\\H_0&W\end{pmatrix},\qquad
 V=R_N\U_N,\qquad H=H_0H_0^T.
\]
The entries of $\B_q$ determine these blocks. The zero block follows from $\binom i{q-1-j}=0$ for $i,j<N$. The top-right entries are $(-1)^{i+k-N+1}\binom i{N-1-k}$, $0\le i,k<N$, which are $(R_N\U_N)_{ik}$. The remaining square block $H_0$ is triangular after reversal, with diagonal entries of absolute value one. Thus $V,H_0$ are invertible and $\det V,\det H_0\in\{1,-1\}$.
Put $L_e=K_{2N}-E_{2N,N}$. Equation~\eqref{eq:K-congruence} and $\B\B^T=G$ give
\begin{equation}\label{eq:even-elim}
 \B L_e\B^T=\begin{pmatrix}X&-X\\X&H-X\end{pmatrix}.
\end{equation}
In fact, $(R+I)G(R-I)$ has blocks
$\begin{pmatrix}X&-X\\X&-X\end{pmatrix}$ in these coordinates, and $\B(I-E_{2N,N})\B^T$ contributes only $H$ in the lower-right block.
Also
$\Omega_{2N}\binom I{-I}=\binom II X$.
If $Xa=0$, the left side annihilates $(a,-a)$; Theorem~\ref{thm:inverse} then gives $a=0$. Thus $X$ is invertible. Subtracting the first block row in \eqref{eq:even-elim} from the second gives an upper triangular block matrix with diagonal $X,H$, so $L_e$ is invertible. Its transformed inverse is
\begin{equation}\label{eq:even-block-inverse}
 \begin{pmatrix}X^{-1}-H^{-1}&H^{-1}\\-H^{-1}&H^{-1}\end{pmatrix}.
\end{equation}
A direct block multiplication with \eqref{eq:even-elim} verifies the inverse.

For a reversal-antisymmetric skew matrix define its fold as minus the sum of its left-left and left-right blocks. The relation $\Omega\binom I{-I}=\binom II X$ implies that the fold of $\Omega^{-1}$ is $-X^{-1}$. The fold of $C^{(N)}$ is $D_N$. If $D_0$ is the fold of $J$, Theorem~\ref{thm:inverse} therefore gives
\begin{equation}\label{eq:Dfold}D_N=D_0+X^{-1}.\end{equation}
The inverse Gram matrix, obtained by inverting the displayed $\B$, is
\[
 G^{-1}=\begin{pmatrix}
 V^{-T}(I+W^TH^{-1}W)V^{-1}&-V^{-T}W^TH^{-1}\\
 -H^{-1}WV^{-1}&H^{-1}
 \end{pmatrix}.
\]
Compute $J=RG^{-1}-G^{-1}R$ block by block and fold; this gives
\begin{equation}\label{eq:D0}
 V^TD_0V=I+W^TH^{-1}W-V^TH^{-1}V-W^TH^{-1}V+V^TH^{-1}W.
\end{equation}
Writing the upper blocks of $G^{-1}$ as $A_0,B_0$ and its lower-right block as $H^{-1}$, we have $J_{LL}=B_0^T-B_0$, $J_{LR}=H^{-1}-A_0$. Negating their sum and conjugating by $V$ yields \eqref{eq:D0}.
The original-coordinate tail of $L_e^{-1}$ is the congruence of \eqref{eq:even-block-inverse} by $\binom VW$, namely
\[
 V^TX^{-1}V-V^TH^{-1}V+V^TH^{-1}W-W^TH^{-1}V+W^TH^{-1}W.
\]
Add $I$ and use \eqref{eq:Dfold}--\eqref{eq:D0}. We have proved the even connection
\begin{equation}\label{eq:even-connection}
 I_N+(L_e^{-1})_{N:2N,N:2N}=\U_N^TR_ND_NR_N\U_N.
\end{equation}

\subsection{Odd dimension}
Order rows as left half, reversed right half, center, and write
\[
 G=\begin{pmatrix}A&B&a\\B^T&D&d\\a^T&d^T&g\end{pmatrix},\quad
 X=D-A+B-B^T,\quad w=2(d-a),\quad \nu=\begin{pmatrix}v\\v\\\alpha\end{pmatrix}.
\]
The deleted matrix in \eqref{eq:deleted-inverse} is invertible, so the preceding argument proves $X$ invertible. Since
$\Omega\begin{pmatrix}I\\-I\\0\end{pmatrix}=\begin{pmatrix}X\\X\\w^T\end{pmatrix}$ and $\nu^T\Omega=0$,
\begin{equation}\label{eq:wXi}w^TX^{-1}=-2v^T/\alpha.\end{equation}
In this order
\[
 O^{(N)}=\begin{pmatrix}A_o&B_o&h\\-B_o&-A_o&-h\\-h^T&h^T&0\end{pmatrix}.
\]
For noncentral $i,j$, central projection changes an entry to
$O_{ij}-\nu_iO_{cj}/\alpha-O_{ic}\nu_j/\alpha$. Folding cancels the $vh^T$ terms and gives
\begin{equation}\label{eq:So}
 S_o=-A_o-B_o+2hv^T/\alpha.
\end{equation}
This is also the Schur complement of $\alpha$ in $\EE_N$. If $S_0$ is the same projected fold of $J$, equations~\eqref{eq:gammaOdd} and \eqref{eq:deleted-inverse} imply
\begin{equation}\label{eq:So-fold}S_o=S_0+X^{-1}.\end{equation}

Split the original columns of $\B_{2N+1}$ into sizes $N+1,N$, and group the last $N+1$ paired rows as reversed right, center. Then
\[
 \B=\begin{pmatrix}0&V\\H_0&Y\end{pmatrix},\quad
 V=R_N\U_N,\quad H=H_0H_0^T,\quad E=(I_N\ \ 0),\quad Z=\binom X{w^T}.
\]
The same entry formula for $\B_q$ proves the zero block and $V$, and gives $\det H_0=\pm1$ (reverse its rows to obtain a triangular Pascal block). Define $L_o=K_{2N+1}-E_{2N+1,N}$. In these blocks reversal is $\begin{pmatrix}0&E\\E^T&e_*e_*^T\end{pmatrix}$, where $e_*$ is the last coordinate in the lower block. Substitution in \eqref{eq:K-congruence} gives
\begin{equation}\label{eq:odd-elim}
 \B L_o\B^T=\begin{pmatrix}X&-XE\\Z&H-ZE\end{pmatrix}.
\end{equation}
The upper blocks of $(R+I)G(R-I)$ are $X,-XE$, the lower-left block is $Z$, and the lower-right block is $-ZE$; the remaining Gram contribution is $H$. Subtracting $ZX^{-1}$ times the first row of blocks from the second leaves $H$, and the inverse is
\begin{equation}\label{eq:odd-inverse-block}
 \begin{pmatrix}
 X^{-1}-EH^{-1}ZX^{-1}&EH^{-1}\\
 -H^{-1}ZX^{-1}&H^{-1}
 \end{pmatrix}.
\end{equation}
Multiplication by \eqref{eq:odd-elim} verifies the inverse.
Equation~\eqref{eq:wXi} says
\begin{equation}\label{eq:ZXi}ZX^{-1}=E^T-e_*2v^T/\alpha.\end{equation}
Let $F=YV^{-1}$ and $Z_0=H^{-1}$. The blocks of $G^{-1}$ are
\[
 G^{-1}=\begin{pmatrix}A_0&-F^TZ_0\\-Z_0F&Z_0\end{pmatrix},\qquad
 A_0=V^{-T}V^{-1}+F^TZ_0F.
\]
Thus
\[
 J_{LL}=-EZ_0F+F^TZ_0E^T,\quad J_{LR}=EZ_0E^T-A_0,\quad
 J_{Lc}=(E+F^T)Z_0e_*.
\]
The projected fold, including its central correction $2J_{Lc}v^T/\alpha$, is consequently
\begin{equation}\label{eq:S0}
 S_0=V^{-T}V^{-1}+(E+F^T)H^{-1}(F-E^T+e_*2v^T/\alpha).
\end{equation}
The original-coordinate tail of $L_o^{-1}$ is the congruence of \eqref{eq:odd-inverse-block} by $\binom VY$. Multiplying on the left by $V^{-T}$ and on the right by $V^{-1}$ expands it as
\[
 X^{-1}+EH^{-1}F+F^TH^{-1}F-(E+F^T)H^{-1}ZX^{-1}.
\]
After adding $V^{-T}V^{-1}$ and inserting \eqref{eq:ZXi}, this is $X^{-1}+S_0=S_o$. Hence the odd connection is
\begin{equation}\label{eq:odd-connection}
 I_N+(L_o^{-1})_{N+1:2N+1,N+1:2N+1}=\U_N^TR_NS_oR_N\U_N.
\end{equation}
All tails in \eqref{eq:even-connection} and \eqref{eq:odd-connection} are taken only after returning to the original candidate coordinates.

\section{Normalization, the main theorem, and boundary cases}\label{sec:closure}
\subsection{Normalization from unrestricted enumeration alone}
Set $s=0$ in Theorem~\ref{thm:true}. Since $B_{n,0}=A_n$ by definition,
\begin{equation}\label{eq:normalization}
 A_{2N}=A_N^2\det D_N,\qquad
 A_{2N+1}=A_N^2\det\EE_N=A_N^2\alpha\det S_o.
\end{equation}
The normalization thus uses the unrestricted count in Theorem~\ref{thm:true}.
For a coordinate embedding $\Pi$, the finite determinant lemma gives
\begin{equation}\label{eq:det-add}
 \det(L+\Pi\Pi^T)=\det L\det(I+\Pi^TL^{-1}\Pi).
\end{equation}
Apply it to $L_e,L_o$, adding back all $N$ missing diagonal entries, and use the two connections. Since $\det\U_N=1$ and $(\det R_N)^2=1$,
\[
 \det K_{2N}=\det L_e\det D_N,\qquad
 \det K_{2N+1}=\det L_o\det S_o.
\]
Equations~\eqref{eq:detK} and \eqref{eq:normalization}, with $A_n>0$ and $\alpha>0$, justify cancellation and give
\begin{equation}\label{eq:base-det}
 \det L_e=A_N^2,\qquad \det L_o=A_N^2\alpha.
\end{equation}

\subsection{Adding back an arbitrary initial segment}
Fix $0\le s\le N$ and $m=N-s$. Adding $1$ to the diagonal entries $N,\ldots,N+m-1$ of $L_e$ gives $K_{2N}-E_{2N,s}$. In odd dimension the corresponding indices are $N+1,\ldots,N+m$. For any $N\times N$ matrix $Z$, upper triangularity gives
\begin{equation}\label{eq:front-congruence}
 (\U_N^TZ\U_N)_{0:m,0:m}
 =(\U_N)_{0:m,0:m}^TZ_{0:m,0:m}(\U_N)_{0:m,0:m}.
\end{equation}
In fact in column $j<m$ the entry $(\U_N)_{ij}$ vanishes for $i>j$, so every contributing summation index is less than $m$. The leading determinant is therefore unchanged. This assertion is only about leading blocks, not arbitrary principal blocks.
Using $Z=R_ND_NR_N$ or $R_NS_oR_N$, simultaneous row and column reversal changes the leading interval into $s:N$ without altering the determinant. Equations~\eqref{eq:det-add}, \eqref{eq:front-congruence} and the connections first give the purely finite identities
\begin{equation}\label{eq:relative-bridges}
 \begin{split}
 \det(K_{2N}-E_{2N,s})&=\det L_e\det(D_N)_{s:N,s:N},\\
 \det(K_{2N+1}-E_{2N+1,s})&=\frac{\det L_o}{\alpha}
 \det(\EE_N)_{s:N+1,s:N+1}.
 \end{split}
\end{equation}
For the second equality, the central scalar in every indicated tail of $\EE_N$ is the same nonzero $\alpha$, and its Schur complement is $(S_o)_{s:N,s:N}$. Finally \eqref{eq:base-det} and Theorem~\ref{thm:true} prove
\begin{align}
 \det(K_{2N}-E_{2N,s})&=A_N^2\det(D_N)_{s:N,s:N}=B_{2N,s},\label{eq:even-final}\\
 \det(K_{2N+1}-E_{2N+1,s})&=A_N^2\det(\EE_N)_{s:N+1,s:N+1}=B_{2N+1,s}.\label{eq:odd-final}
\end{align}
Proposition~\ref{prop:candidate} at $\lambda=1$ proves Theorem~\ref{thm:main} in the nontrivial range.

\subsection{Endpoints}
For $n=1$ the only ASM is $[1]$, so $B_{1,0}=1$ and $B_{1,1}=0$. At $s=1$, $h_1=b_1=1$, $f_0^+=(1-z)/z$, $f_0^-=(1+z)/z$, and \eqref{eq:cpM} gives $M_{00}=1$. No zero-dimensional commutator inverse is used.
For every $n$, $s=0$ has $M$ empty and $B_{n,0}=A_n$, agreeing with \eqref{eq:detK}.
At $2s=n$, write $n=2N$, $s=N$. The known integral has zero variables and gives $A_N^2$ directly; the determinant of the empty tail of $D_N$ is $1$. Its candidate counterpart is $\det L_e=A_N^2$.
At $2s=n-1$, write $n=2N+1$, $s=N$. There is one residue, giving $A_N^2\nu_N$, and the tail of $\EE_N$ is the $1\times1$ matrix $[\nu_N]$. The candidate counterpart is $\det L_o=A_N^2\nu_N$.
For $2s>n$, the combinatorial and rank arguments in Section~\ref{sec:candidate} make both sides zero, completing the stated parameter range.

\section{Formal verification}\label{sec:lean}
The finite-dimensional algebraic core has been formalized over $\Q$ in Lean 4.19.0 with Mathlib commit
\texttt{c44e0c8ee63ca166450922a373c7409c5d26b00b}. It covers the refined-polynomial identities, kernel divisibility and truncation, Theorem~\ref{thm:inverse}, the odd nullspace and central projection, both matrix connections, and the relative bridges \eqref{eq:relative-bridges} for arbitrary admissible parameters.

The absolute factor $A_N^2$ in the formal statements requires normalization hypotheses specifying $\det K_n=A_n$ and \eqref{eq:normalization}; the relative bridges and core inverse are independent of these hypotheses. ASM combinatorial definitions, the CP24 integral input, the analytic and Pfaffian derivation of the enumeration formulas, and the candidate-to-CP/FR identification remain outside the formalization. The external asymptotic theorem used in Corollary~\ref{cor:tw} is also outside its scope.

The formalized theorems use only the standard logical axioms reported by Mathlib. The Lean source code, build instructions and verification details accompany the submission as separate supplementary files. Supplementary Material, Section~S.2, records the scope and dependencies.

\appendix
\section{Coefficient ratios for the recurrence}\label{app:certificate}
For $m\ge5$, $0\le a\le\lfloor(m+1)/2\rfloor$, divide the coefficients in \eqref{eq:normalized-rec} by $c_{m,a}$. In order, the ratios for
$c_{m,a-1},c_{m,a-2},c_{m-1,a},c_{m-1,a-1},c_{m-1,a-2},c_{m-1,a-3},c_{m-2,a-2}$ are
\begin{align*}
 u_1&=\frac{a(2m-a-1)}{(m+a-1)(m-a)},\\
 u_2&=\frac{a(a-1)(2m-a)(2m-a-1)}{(m+a-2)(m+a-1)(m-a)(m-a+1)},\\
 v_0&=\frac{(m-a-1)(2m-2)(2m-3)}{(m+a-1)(2m-a-2)(2m-a-3)},\\
 v_1&=\frac{a(2m-2)(2m-3)}{(m+a-1)(m+a-2)(2m-a-2)},\\
 v_2&=\frac{a(a-1)(2m-2)(2m-3)}{(m+a-1)(m+a-2)(m+a-3)(m-a)},\\
 v_3&=v_2\frac{(a-2)(2m-a-1)}{(m+a-4)(m-a+1)},\\
 w_2&=\frac{a(a-1)(2m-2)(2m-3)(2m-4)(2m-5)}
 {(m+a-4)(m+a-3)(m+a-2)(m+a-1)(2m-a-3)(2m-a-2)}.
\end{align*}
Each denominator is nonzero: $m+a-4\ge1$, $m-a\ge2$ and $2m-a-3\ge4$ in this range, and the other factors are larger positive integers. The falling products in the numerators give the zero values of the missing shifted coefficients at $a=0,1,2$. The coefficient assertion is
\[
 1-2u_1+u_2-v_0+\tfrac32v_1+\tfrac32v_2-v_3-\beta_{m-1}w_2=0.
\]
Multiply its eight terms by
$D=(m-a)(m-a+1)(2m-a-3)(2m-a-2)\prod_{j=1}^4(a+m-j)$.
Clearing this denominator gives an identity in $\mathbb Z[a,m]$. The eight resulting polynomials $N_0,\ldots,N_7$ and the certificate $\sum_{i=0}^7N_i=0$ are displayed in Supplementary Material, Section~S.1 (\texttt{CP\_SUPPLEMENTARY\_MATERIAL.tex}). The same identity is verified by \texttt{CP.recurrence\_certificate} in \texttt{CP/Polynomial.lean}.
Dividing by the nonzero $D$ completes the coefficient calculation. Together with the low-order cases and reciprocity in Lemma~\ref{lem:recurrence}, this proves the full recurrence.

\section{Computational checks}
Historical exact integer and rational checks covered polynomial orders through $20$, candidate sizes through $16$, direct ASM enumeration through $12$, and both bridges for $1\le N\le12$. They served to detect sign and indexing errors and play no part in the general proof. These reported ranges have not been independently rerun; their provenance is recorded in Supplementary Material, Section~S.3.

\end{document}